\newtheorem*{theorem*}{Theorem}
\newtheorem{theorem}{Theorem}
\newtheorem{lemma}[theorem]{Lemma}
\newtheorem{remark}{Remark}
\newtheorem*{remark*}{Remark}
\newtheorem{question}{Question}
\newenvironment{definition}[1][Definition]{\begin{trivlist}
\item[\hskip \labelsep {\bfseries #1}]}{\end{trivlist}}
\def\L{\mathcal{L}}
\def\R{\mathbb{R}}
\def\T{\mathbb{T}}
\def\M{\mathcal{M}}
\def\U{\mathcal{U}}
\def\S{\mathcal{S}}
\def\f{\varphi}
\def\e{\varepsilon}
\DeclareMathOperator\supp{supp}
\author[
  Andrew Clarke
]{
  Andrew Clarke\address{Andrew Clarke, \newline Universitat de Barcelona,  \newline Gran Via de Les Corts Catalanes 585, \newline Barcelona 08007, Spain}\email{  \texttt{andrew.clarke@ub.edu}}
}
\date{\today}
\title{Geodesics with Unbounded Speed on Fluctuating Surfaces}
\begin{document}
\maketitle
\begin{abstract}
We construct $C^{\infty}$ time-periodic fluctuating surfaces in $\R^3$ such that the corresponding nonautonomous geodesic flow has orbits along which the energy, and thus the speed, goes to infinity. 
\end{abstract}

\section{Introduction}

Consider a smooth manifold $M$ of dimension $n \geq 2$ with a smooth metric $g$. The geodesic flow on $TM$ is the flow associated with the Hamiltonian function $H (q,p) = \frac{1}{2} \, g (q) (p,p)$ with respect to the symplectic form $dq \wedge dp$, where $q \in M$ and $p \in T_q M$. Since this is an autonomous Hamiltonian system the energy $H$ is an integral of motion, and therefore so too is the speed $\sqrt{ 2 \, H}$. Suppose now we add a small nonautonomous time-periodic perturbation $\e \, \bar g$ to the metric to obtain $g_{\e}(q,t) = g (q) + \e \, \bar g (q,t)$ with $\bar g ( q, t + 2 \pi) = \bar g (q,t)$ for all $q \in M$, $t \in \R$, where $\e > 0$ is small. We say that $g_{\e}$ is a \emph{nonautonomous metric} on $M$ if $g_{\e} ( \cdot, t)$ defines a Riemannian metric on $M$ for each fixed value of time $t$. Nonautonomous metrics of the form $g_{\e}$ give rise to nonautonomous Hamiltonian functions $H_{\e} (q,p,t) = H(q,p) + \e \, \bar H (q,p,t)$ where $H$ is as before, and where $\bar H = \frac {1}{2} \bar g (q,t)(p,p)$. Since $H_{\e}$ is nonautonomous, its flow $\phi_{\e,t}$ no longer preserves the energy $H_{\e}$, and so a natural question arises: 
\begin{question}\label{question_metrics}
Does the time-periodic nature of $\bar g$ imply that the changes in energy cannot accumulate, or can we find $(q,p) \in TM$ such that $H_{\e} \left( \phi_{\e,t} (q,p),t \right) \to \infty$ as $t \to \infty$?
\end{question}
Orbits of this type, should they exist, travel progressively faster around the manifold, where the speed goes to infinity with time. 

A closely related question can be formulated as follows. Suppose now that $M$ is a smooth closed hypersurface of $\R^d$ where $d \geq 3$, and suppose $M$ is equipped with the induced metric $\delta|_{M}$ where $\delta$ is the standard Euclidean metric on $\R^d$ (i.e. $\delta$ is the $d \times d$ identity matrix). Consider now a $C^{\infty}$ $2 \pi$-time periodic embedding $G_{\e,t} : M \to \R^d$ such that $G_{\e,t}$ is $O(\e)$-close to the identity in $C^{\infty}$. Then $M_{\e,t} = G_{\e,t} (M)$ is a $2 \pi$-time periodic \emph{fluctuating hypersurface} in $\R^d$ that is $O (\e)$ close to $M$ in $C^{\infty}$. The geodesic flow on $M_{\e,t}$ is defined by using the coordinates from $M$ via the embedding $G_{\e,t}$ and using the (nonautonomous) pullback metric $g_{\e}(\cdot,t)=\left( G_{\e,t} \right)^* \left( \left. \delta \right|_{M_{\e,t}}\right)$; then the geodesic flow on the fluctuating hypersurface $M_{\e,t}$ is the flow associated with the Hamiltonian function $H_{\e} (q,p,t) = \frac{1}{2} \, g_{\e} (q,t) (p,p)$. 
\begin{question}\label{question_surfaces}
Does the geodesic flow on the fluctuating hypersurface $M_{\e,t}$ have orbits along which the energy $H_{\e}$ goes to infinity?
\end{question}
Question \ref{question_metrics} is closely related to the Mather problem in the case where $M$ is a torus \cite{bolotin1999unbounded}. Questions \ref{question_metrics} and \ref{question_surfaces} are not equivalent, because there exist metrics on $M$ that cannot be obtained by pulling back the metric from another hypersurface in $\R^d$ (see for example \cite{clarke2019generic,stojanov1990bumpy,stojanov1993generic}). These questions, in their full generality, are very difficult to answer, for a number of reasons that are outlined below. The purpose of this paper is to find a class of surfaces $M \subset \R^3$ and embeddings $G_{\e,t} : M \to \R^3$ that yield a positive answer to Question \ref{question_surfaces} (and a fortiori to Question \ref{question_metrics}) in the sense of the following theorem. 
\begin{theorem}\label{theorem_main1}
Let $M$ be a $C^{\infty}$ closed surface in $\R^3$ equipped with the restriction $\left. \delta \right|_{M}$ of the Euclidean metric $\delta$ such that the corresponding geodesic flow has a hyperbolic periodic orbit and a transverse homoclinic. Then there exist $C^{\infty}$ $2 \pi$-time periodic embeddings $G_{\e,t} : M \to \R^3$ and points $(q_0,p_0), (q_1,p_1) \in TM$ such that 
\begin{equation}\label{eq_nonautdiffusive}
H_{\e} \left( \phi_{\e,t} (q_0,p_0),t \right) \to \infty \quad \mathrm{as} \quad t \to \infty
\end{equation}
and
\begin{equation}\label{eq_nonautoscillatory}
\liminf_{t \to \infty} H_{\e} \left( \phi_{\e,t} (q_1,p_1),t \right) = c < \infty, \quad \limsup_{t \to \infty} H_{\e} \left( \phi_{\e,t} (q_1,p_1),t \right) = \infty
\end{equation}
where $\phi_{\e,t}$ is the flow associated with the Hamiltonian function $H_{\e} (q,p,t) = \frac{1}{2} g_{\e} (q,t)(p,p)$ with $M_{\e,t} = G_{\e,t} (M)$, $g_{\e}(\cdot,t)=\left( G_{\e,t} \right)^* \left( \left. \delta \right|_{M_{\e,t}}\right)$, and $p \in T_q M$. Moreover the initial speed $J_j = \| p_j \|$ is of order 1. 
\end{theorem}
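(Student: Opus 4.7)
The plan is to lift the problem to the extended phase space $TM \times \T$ and exploit the positive homogeneity of the unperturbed Hamiltonian $H(q,p) = \frac{1}{2} g(q)(p,p)$: the map $(q,p) \mapsto (q, \lambda p)$ conjugates the geodesic flow to its time-$\lambda$ rescaling, so the hypothesised hyperbolic periodic orbit $\gamma_0$ and transverse homoclinic $h_0$ belong to one-parameter families $\{\gamma_\lambda\}_{\lambda > 0}$ and $\{h_\lambda\}_{\lambda > 0}$ indexed by energy, and their union
\[
\Lambda_0 \;=\; \bigcup_{\lambda > 0} \gamma_\lambda \times \T \;\subset\; TM \times \T
\]
is a three-dimensional normally hyperbolic invariant manifold whose stable and unstable manifolds meet transversely along a three-dimensional homoclinic manifold. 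For $\e$ small, NHIM persistence (Fenichel, Hirsch--Pugh--Shub) applied on compact energy ranges yields a perturbed NHIM $\Lambda_\e$, a persistent transverse homoclinic channel, and hence a scattering map $\sigma_\e \colon \Lambda_\e \to \Lambda_\e$ in the sense of Delshams--de la Llave--Seara.

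Using the energy $E$ as a coordinate on $\Lambda_\e$, the leading-order Melnikov expansion of the jump $\Delta E$ produced by one application of $\sigma_\e$ takes the convergent reduced form
\[
\Delta E(E, t_0) \;=\; \e \int_{-\infty}^{+\infty} \Bigl[ \frac{\partial \bar H}{\partial t}\bigl(\gamma_E^h(s), s + t_0\bigr) - \frac{\partial \bar H}{\partial t}\bigl(\gamma_E^{\mathrm{asym}}(s), s + t_0\bigr) \Bigr] ds + O(\e^2),
\]
where $\gamma_E^{\mathrm{asym}}$ denotes the asymptotic periodic orbit, with appropriate forward/backward time shifts, subtracted to secure convergence. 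Because $\bar H = \frac{1}{2} \bar g(q,t)(p,p)$ is quadratic in $p$ and the homoclinic excursion takes time $O(E^{-1/2})$, this integral is of order $\sqrt{E}$ with nontrivial $t_0$-dependence inherited from the time Fourier modes of $\bar g$ along the homoclinic.

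The main difficulty is that $\bar g$ cannot be chosen freely: it must arise as the first-order part of the pullback $G_{\e,t}^*(\delta|_{M_{\e,t}})$. I would handle this by setting $G_{\e,t}(q) = q + \e f(q,t) \nu(q)$, with $\nu$ the outward unit normal to $M$ and $f$ smooth and $2\pi$-periodic in $t$; a short computation then gives $\bar g = -2 f(q,t) \, \mathrm{II}(q)$, where $\mathrm{II}$ is the second fundamental form of $M$. On any closed surface in $\R^3$ the set $\{\mathrm{II} \neq 0\}$ is open and nonempty, and the hyperbolic geodesic $\gamma_0$ must enter it; localising $f$ near such a point, with at least one nonzero time harmonic, produces a $\bar g$ for which the Melnikov function $\Delta E(E, \cdot)$ is nonconstant in $t_0$, takes both signs, and has amplitude comparable to $\e \sqrt{E}$ for all $E \geq 1$.

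With such a non-degenerate scattering map in hand, both orbits are obtained by standard shadowing arguments for NHIMs. For the diffusing orbit of \eqref{eq_nonautdiffusive} I would concatenate a pseudo-orbit alternating arcs of the inner dynamics on $\Lambda_\e$ (used to bring the phase $t_0$ into a favourable range) with applications of $\sigma_\e$ chosen to maximise $\Delta E$; the accumulated gain $\sum_k \e \sqrt{E_k}$ diverges, and the Gidea--de la Llave shadowing lemma, applied piecewise over a nested sequence of compact energy windows, produces a true orbit with the same behavior. For the oscillatory orbit of \eqref{eq_nonautoscillatory}, the pseudo-orbit is built to alternate between returning close to a fixed moderate energy $c$ and reaching arbitrarily large energies, which is possible because both signs of $\Delta E$ are available; shadowing then yields an orbit with $\liminf E = c$ and $\limsup E = \infty$. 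The initial speed bound $J_j = O(1)$ is automatic since both pseudo-orbits are launched from $\Lambda_\e$ at moderate energy.
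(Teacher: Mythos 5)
Your overall outline matches the paper's strategy: lift to the extended phase space, use positive homogeneity in $p$ to build a noncompact normally hyperbolic cylinder fibred over energy, extract a scattering map via Delshams--de la Llave--Seara and a Melnikov-type formula, realise the metric perturbation as a normal graph $G_{\e,t}(q) = q + \e f(q,t)\nu(q)$ giving $\bar g = -2f\,\mathrm{II}$ to leading order, and invoke Gidea--de la Llave shadowing. These are all the same ingredients as in the paper, and your order estimate $\Delta E \sim \e\sqrt{E}$ coming from the $O(E^{-1/2})$ traversal time is the right one.

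However, there is a genuine gap in where you localise the perturbation. You propose to localise $f$ near a point where $\gamma_0$ enters $\{\mathrm{II}\neq 0\}$, i.e.\ near a point \emph{on} the hyperbolic closed geodesic. The paper instead insists that $\supp\bar g\cap\gamma=\emptyset$. This is not cosmetic. If the perturbation is supported near $\gamma$, the invariant cylinder $\Lambda^*$ is no longer exactly invariant for the perturbed flow, the inner dynamics are modified at order $\e$, and the clean twist formula for the return map (Lemma~\ref{lemma_innermap}) is destroyed. Moreover your subtracted Melnikov integral then receives its entire contribution from the exponentially decaying tails of the homoclinic (the only places where the perturbation is ``on'' and the homoclinic differs from the asymptotic periodic orbit), and it is far from clear that this produces a jump of size $\e\sqrt{E}$ uniformly over a half-line of energies; your ``amplitude comparable to $\e\sqrt{E}$ for all $E\geq 1$'' is asserted, not proved. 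The paper sidesteps both problems: since $\xi$ is homoclinic to $\gamma$ it eventually passes through the neighbourhood of $\gamma$ where $\kappa\neq 0$, so one can choose a point $\xi(0)$ \emph{on the homoclinic, close to but off $\gamma$}, where $\kappa(\xi(0),\xi'(0))\neq 0$ and $\xi$ has no self-intersections, and localise there. Then $\Lambda^*$ is exactly invariant, the inner map is an explicit twist, and the Melnikov integral is a compactly supported single-pass integral.

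The second missing piece is the quantitative nondegeneracy of the scattering map at $O(1)$ energies. The paper's Lemma~\ref{lemma_scattering} uses a Dirac-delta approximation $\alpha_\rho$ in the arclength variable along $\xi$: for $\rho$ small, the Melnikov integral collapses to $\e J\,\chi'(\theta + (a_--\f)/J)$ up to an error $R^j_\rho$ that tends to $0$ \emph{uniformly in $J\geq J_0$}. This uniformity is precisely what lets you choose a single small $\rho$ and then build a transition chain starting at speed $J_0 = O(1)$ and running to $J\to\infty$, with the transversality of Lemma~\ref{lemma_transversalityoffoliations} holding at every stage. Without such an estimate you cannot rule out the Melnikov function degenerating (or even changing sign in the wrong places) as $E$ varies, and your claimed initial-speed bound is not justified. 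So you have the correct skeleton, but the choice of localisation point and the explicit uniform control of the scattering map -- the technical heart of Section~\ref{section_perturbations} -- are absent.
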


Orbits of the form \eqref{eq_nonautdiffusive} are called \emph{diffusive trajectories} whereas orbits of the form \eqref{eq_nonautoscillatory} are referred to as \emph{oscillatory trajectories}. The strategy of the proof of Theorem \ref{theorem_main1} is as follows. The assumptions on the geodesic flow on the unperturbed manifold $M$ (i.e. existence of a hyperbolic closed geodesic $\gamma$ and transverse homoclinic geodesic $\xi$) imply that the geodesic flow has a hyperbolic periodic orbit in every energy level, together with a transverse homoclinic. Indeed, this is because the dynamics of the geodesic flow is the same in every energy level, as a result of the Hamiltonian $H_{\e}$ being homogeneous of second degree in the momenta $p$. Therefore we can construct a normally hyperbolic invariant manifold $\Lambda$ in $TM$ consisting of the hyperbolic closed geodesic on an interval of energy levels. The cylinder $\Lambda$ is parametrised by an arclength parameter on $\gamma$ and the speed of the geodesic flow; since we are interested in the situation where the speed goes to infinity, the cylinder $\Lambda$ is noncompact. As a result of the existence of the transverse homoclinic geodesic $\xi$, the stable and unstable manifolds of $\Lambda$ have a transverse homoclinic intersection.

Since the Hamiltonian $H_{\e}$ is nonautonomous, it is convenient to introduce a new pair of variables $\theta \in \T$ and $\Theta \in \R$ and to define an autonomous Hamiltonian $H_{\e}^*$ depending on $q,p,\theta,\Theta$. Here the angle $\theta$ represents the $2 \pi$-periodic time dependence of $H_{\e}$, and $\Theta$ is a dummy variable that is symplectically conjugate to $\theta$. It is then apparent from the form of the Hamiltonian that the speed of the geodesic flow goes to infinity if and only if $\Theta \to - \infty$. This construction is carried out in Section \ref{section_unperturbed}. 

In Section \ref{section_perturbations} we show how to compute explicitly the first order of the scattering map for a specific class of perturbations. This is done by passing to Fermi coordinates (see for example \cite{klingenberg1976lectures}) in a neighbourhood of a transverse homoclinic point, and designing a time-periodic nonautonomous perturbation of the metric that is localised in this neighbourhood. We then show that such perturbations of the metric can be realised by a fluctuating surface. 

Finally, in Section \ref{section_transitionchains} we choose a specific nonautonomous perturbation of the surface with zero time average, and prove that it produces orbits satisfying \eqref{eq_nonautdiffusive} and \eqref{eq_nonautoscillatory} using shadowing results from \cite{gidea2006topological}. 

This phenomenon is a form of \emph{Arnold diffusion} \cite{Arnold:1964,bolotin1999unbounded,delshams2000geometric}, in the \emph{a priori chaotic} case \cite{delshams2000geometric,gelfreich2017arnold}, meaning that the unperturbed system (i.e. the geodesic flow on the static manifold $M$) has a normally hyperbolic invariant manifold, the stable and unstable manifolds of which have a transverse homoclinic intersection. Other forms of Arnold diffusion have been studied: in the \emph{a priori unstable} case \cite{delshams2006geometric,treschev2004evolution}, the unperturbed system has a normally hyperbolic invariant manifold, but the stable and unstable manifolds coincide; and in the \emph{a priori stable} case the unperturbed system is completely integrable, and no further assumptions are made \cite{Bernard08,Kaloshin:2016,Cheng:2017,Kaloshin:2020}. In addition to the geometric approaches to the problem of Arnold diffusion, variational methods have also had success \cite{berti2003drift,berti2002functional,bessi1996approach,cheng2004existence,cheng2009arnold,mather1993variational}. 

This idea is closely related to \emph{Fermi acceleration} \cite{bolotin1999unbounded,delshams2008geometric2,gelfreich2008unbounded}. Fermi acceleration describes the unbounded growth of energy in nonautonomous Hamiltonian systems, typically systems with impacts (i.e. billiards; see \cite{dettmann2018splitting,gelfreich2011robust,gelfreich2012fermi}). In order to display either Arnold diffusion or Fermi acceleration, a Hamiltonian system must have at least two and a half degrees of freedom, because two-dimensional invariant tori of a Hamiltonian flow divide a four-dimensional phase space into invariant regions. For this reason, Theorem \ref{theorem_main1} addresses only the case where $M$ is a surface in $\R^3$, as the resulting nonautonomous geodesic flow has two and a half degrees of freedom, but the proof applies equally to the case of hypersurfaces in higher-dimensional Euclidean spaces. 

The setting of perturbations of geodesic flows in Arnold diffusion is well-studied \cite{clarke2022arnold,delshams2000geometric,delshams2006orbits,gidea2017perturbations}. However none of the previous literature applies directly to geodesic flows on fluctuating hypersurfaces and, to the best of the author's knowledge, the present article is the first in which such models are considered. Moreover the equations of these nonautonomous geodesic flows appear to be immune to an attack of the type contained in those papers. It is therefore unclear a priori that there is not some peculiarity of the nonautonomous geodesic flow equations on fluctuating hypersurfaces that prevents the unbounded drift of energy; the results of this paper prove that there is no such peculiarity. 

In this paper we establish the existence of orbits of unbounded energy only for a special class of locally supported nonautonomous perturbations of a special class of surfaces, but Questions \ref{question_metrics} and \ref{question_surfaces} in their full generality would pertain to \emph{generic} nonautonomous perturbations of \emph{generic} surfaces. Broadly, the difficulty in answering these questions from the generic point of view is twofold:
\begin{itemize}
\item
\textbf{The Unperturbed Problem:} To apply the usual methods of (a priori unstable or chaotic) Arnold diffusion, we need the unperturbed system to have a normally hyperbolic invariant manifold, with the stable and unstable manifolds having some (non-transverse or transverse) intersection. This amounts to the existence of a hyperbolic closed geodesic and a homoclinic geodesic for the unperturbed problem; the existence of these objects is an active area of research. It is known that for a fixed manifold $M$ there is a $C^2$-dense open set of $C^{\infty}$ metrics such that these objects exist \cite{contreras2010geodesic,contreras2002genericity}. Moreover it is known that there is a $C^{\infty}$-dense open set of metrics on $\mathbb{S}^2$ such that these conditions are satisfied \cite{knieper2002c}. It is also known that similar results hold for hypersurfaces of $\R^d$ in the analytic topology \cite{clarke2019generic}. However the question of $C^{\infty}$-genericity in the general case remains open. 
\item
\textbf{Analysis of Perturbations:} A typical approach that is used in the perturbation theory of geodesic flows is to pass to Fermi coordinates (see \cite{klingenberg1976lectures} for a description) and perturb the metric in those coordinates (see for example \cite{anosov1983generic,clarke2019generic,contreras2010geodesic,contreras2002genericity,klingenberg1991generic,stojanov1990bumpy,stojanov1993generic}). The issue with this approach in relation to the current problem is that we must adapt the Fermi coordinates to a homoclinic geodesic (an infinite curve on the compact manifold $M$ that is likely to have self-intersections) in order to analyse the scattering map, and Fermi coordinates break down when the curve to which they are adapted has self-intersections. In this paper we consider a neighbourhood of a point on the homoclinic geodesic in which it has no self-intersections, pass to Fermi coordinates in this neighbourhood, and show how to compute the perturbed scattering map when the perturbation of the metric is supported in this neighbourhood. However it is not clear how this method could be adapted to the setting of globally-supported perturbations, nor is it clear that there is another system of coordinates in which the nonautonomous flow can be better understood. 
\end{itemize}

There are a few examples of perturbation theoretic results in geodesic flows in which Fermi coordinates are not used, and the analysis is of a global nature; see for example \cite{hofer2002pseudoholomorphic,hofer2003finite,knieper2002c}. In \cite{knieper2002c}, the geodesic flow on $\mathbb{S}^2$ is considered; Birkhoff proved that geodesic flows on 2-spheres can be reduced to a Poincar\'e map on a global surface of section that is diffeomorphic to a cylinder $\T \times [0,1]$, and so the analysis in \cite{knieper2002c} can use the classical theory of twist maps of the annulus. These techniques however do not apply to manifolds with a different topology to that of $\mathbb{S}^2$. The papers \cite{hofer2002pseudoholomorphic,hofer2003finite} are in fact about 3-dimensional Reeb flows, which includes the restriction of geodesic flows on 2-dimensional Riemannian manifolds to the unit tangent bundle, and so they make use of the methods of symplectic topology. It is currently unclear whether such methods could be applied to the problem in consideration. 

One setting in which further results of this type could likely be produced is in perturbations of geodesic flows on triaxial ellipsoids in $\R^3$. In a beautiful exposition in \cite{knieper1994surface}, it is pointed out that such geodesic flows have a pair of hyperbolic periodic orbits (corresponding to the same closed geodesic, traversed in opposite directions) connected by a non-transverse heteroclinic cycle. It is also shown in that paper via Melnikov-type theory that certain perturbations of the metric can split the separatrices. It is possible that such analysis could also be applied to the case of geodesic flows on nonautonomous perturbations of ellipsoids. 

In fact, the problem addressed in the current paper was motivated by the following question: does the geodesic flow on a \emph{breathing ellipsoid} (an ellipsoid that is allowed to vary periodically with time through a family of ellipsoids, so that for each fixed time the static manifold is an ellipsoid that is $O(\e)$ close to the original ellipsoid) in $\R^3$ possess orbits along which the energy goes to infinity? This question seems potentially intractable for two reasons: first of all, the Poincar\'e-Melnikov potential will be some type of elliptic integral, which would likely be quite difficult to interpret analytically; and in any case, it seems to be possible that the scattering map is trivial, in which case the usual techniques of diffusion do not apply. Indeed, this question is similar to the problem addressed in \cite{dettmann2018splitting}, in which they considered billiard dynamics in a breathing ellipse in $\R^2$. In that paper, the scattering map turned out to be trivial, and so a secondary perturbation was added in order to guarantee its well-definition. 

\subsection*{Acknowledgments}
This project received funding from the European Research Council (ERC) under the European Union's Horizon 2020 research and innovation programme (grant agreement No. 757802). The author is grateful to Marcel Guardia and Dmitry Turaev for useful discussions.

\section{Analysis of the Unperturbed Problem}\label{section_unperturbed}

The purpose of this section is to establish the existence of a normally hyperbolic invariant manifold with a transverse homoclinic intersection, and to transform our nonautonomous Hamiltonian system with two and a half degrees of freedom to an autonomous Hamiltonian system with three degrees of freedom. In fact, the contents of this section are presented for a hypersurface $M$ of $\mathbb{R}^{n+1}$ where $n \geq 2$. The analysis in this section is similar to Section 3 of \cite{delshams2006orbits}.

\subsection{The A Priori Chaotic Setting}

Denote by $M$ a smooth closed manifold of dimension $n \geq 2$, and by $g$ a Riemannian metric on $M$. Suppose $M$ is a hypersurface of $\R^{n+1}$, and $g$ is the restriction to $M$ of the standard inner product on $\R^{n+1}$. Suppose there is a hyperbolic closed geodesic $\gamma$ on $M$; that is, there is a closed geodesic $\gamma$ on $M$ that gives rise to a hyperbolic periodic orbit of the geodesic flow in each energy level. Suppose moreover there is a transverse homoclinic geodesic $\xi$ to $\gamma$; that is to say, a geodesic curve $\xi$ on $M$ such that the corresponding orbit of the geodesic flow lies in the transverse homoclinic intersection of the stable and unstable manifolds of $\gamma$ in each energy level. 

Denote by $z=(z_0, \ldots, z_n)$ coordinates on $\R^{n+1}$, and by $v=(v_0, \ldots, v_n)$ the corresponding tangent coordinates. Then the restriction to $M$ of the 2-form $\omega = dz \wedge dv$ defines a symplectic form on $\M = TM$. Reparametrising $\R^{n+1}$ if necessary, we may assume without loss of generality that the length of $\gamma$ is $2 \pi$. We can thus give the geodesic $\gamma$ an arclength parametrisation, which for convenience we denote by $\gamma : \T \to \R^{n+1}$, where $\T = \R / 2 \pi \mathbb{Z}$, such that $\| \gamma' (\f) \| \equiv 1$. Then $\gamma (\f) \in M$ and $\gamma' (\f) \in T_{\gamma (\f)} M$ for all $\f \in \T$. Fix some $J_0 > 0$. The set
\[
\Lambda = \left\{ (z,v) \in \R^{2(n+1)} : z = \gamma (\f), \, v = J \gamma' (\f) \text{ where } \f \in \T, \, J \geq J_0 \right\}
\]
defines a 2-dimensional non-compact normally hyperbolic invariant cylinder of the geodesic flow on $M$ with respect to the metric $g$. Moreover $\Lambda$ is diffeomorphic to $\R \times [J_0, \infty)$, and is equipped with natural coordinates $(\f, J)$. In these coordinates, the vector field of the geodesic flow takes the form
\begin{equation}\label{eq_innervf}
\dot \f = J, \quad \dot J=0. 
\end{equation}

\begin{lemma}\label{lemma_innersympform}
The restriction to $\Lambda$ of the symplectic form $\omega$ is
\[
\left. \omega \right|_{\Lambda} = d \f \wedge d J. 
\]
\end{lemma}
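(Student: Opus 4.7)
The plan is a direct pullback computation using the natural parametrization of $\Lambda$ and the unit-speed property of $\gamma$.

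First I would introduce the embedding $\iota : \T \times [J_0, \infty) \to \R^{2(n+1)}$ given by $\iota(\f, J) = (\gamma(\f), J \gamma'(\f))$, whose image is exactly $\Lambda$. Since the restriction $\omega|_{\Lambda}$ as a form on $\Lambda$ is by definition $\iota^* \omega$, the task reduces to computing $\iota^*(dz \wedge dv) = \sum_{i=0}^{n} \iota^*(dz_i \wedge dv_i)$.

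Next I would compute the two pullbacks componentwise. From $z_i = \gamma_i(\f)$ one gets $\iota^* dz_i = \gamma_i'(\f) \, d\f$, and from $v_i = J \gamma_i'(\f)$ one gets $\iota^* dv_i = \gamma_i'(\f) \, dJ + J \gamma_i''(\f) \, d\f$. Wedging these and using $d\f \wedge d\f = 0$ yields
\[
\iota^*(dz_i \wedge dv_i) = \gamma_i'(\f)^2 \, d\f \wedge dJ,
\]
so that
\[
\iota^* \omega = \Bigl( \sum_{i=0}^{n} \gamma_i'(\f)^2 \Bigr) \, d\f \wedge dJ = \| \gamma'(\f) \|^2 \, d\f \wedge dJ.
\]

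The final step is to invoke the arclength normalization $\| \gamma'(\f) \| \equiv 1$ chosen just before the statement of the lemma, which gives $\iota^* \omega = d\f \wedge dJ$. The curvature term involving $\gamma''(\f)$ has already dropped out because of the antisymmetry of the wedge with itself; the orthogonality $\gamma'(\f) \cdot \gamma''(\f) = 0$ (obtained by differentiating $\|\gamma'\|^2 \equiv 1$) is therefore not strictly needed here, though it is the reason one could alternatively write the mixed term as a total derivative. There is no real obstacle in this proof; the only point worth flagging is the notational convention that $dz \wedge dv$ means $\sum_i dz_i \wedge dv_i$, so that the scalar $\| \gamma'(\f) \|^2$ arising from the sum is what collapses the coefficient to $1$.
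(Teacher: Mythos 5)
Your proof is correct and essentially identical to the paper's: both pull back $dz\wedge dv$ along the parametrization $(\f,J)\mapsto(\gamma(\f),J\gamma'(\f))$, note that the $\gamma''$ term vanishes by $d\f\wedge d\f=0$, and use the unit-speed normalization $\|\gamma'\|^2=1$ to collapse the coefficient. The only stylistic difference is that you carry out the computation componentwise while the paper writes it vectorially; your remark that the orthogonality $\gamma'\cdot\gamma''=0$ is not actually needed is a correct and sensible observation.
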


\begin{proof}
On $\Lambda$ we have $z = \gamma (\f)$ and $v = J \gamma' (\f)$. Therefore $dz = \gamma' (\f) \, d \f$ and $dv = \gamma'(\f) \, dJ + J \gamma'' (\f) \, d \f$. It follows that $dz \wedge dv = d \f \wedge dJ$ since $\gamma' (\f) \cdot \gamma' (\f) = \| \gamma' (\f) \|^2 = 1$. 
\end{proof}

Now, we can also give the transverse homoclinic curve $\xi$ an arclength parametrisation $\xi: \R \to \R^{n+1}$, so $\| \xi' (x_0) \| \equiv 1$ for all $x_0 \in \R$. Fix some $\delta_0 \in \left(0, \pi \right)$, and define the set
\[
\Gamma = \left\{ (z,w) \in \R^{n+1} : z= \xi (x_0), \, v = J \xi' (x_0) \text{ where }  |x_0| \leq \delta_0, \, J \geq J_0  \right\}. 
\]
Then $\Gamma$ is a homoclinic channel for $\Lambda$, because the projections $\left. \pi^{s,u} \right|_{\Gamma} : \Gamma \to \Lambda$ are diffeomorphisms onto their respective images. We can therefore define the scattering map
\[
s = \pi_s \circ \left( \left. \pi_u \right|_{\Gamma} \right)^{-1} : \pi_u \left(\Gamma \right) \longrightarrow  \pi_s \left(\Gamma \right). 
\]
It can easily be proved that there are $a_{\pm} \in \R$ (independent of $x_0$ and $J$) such that $\pi_s \left(x_0,J\right) = (x_0 + a_+,J)$ and $\pi_u \left(x_0,J\right) = (x_0 + a_-,J)$. It follows that the scattering map is given by $s(\f, J) = (\f + \Delta, J)$ where $\Delta = a_+ - a_-$. 

\subsection{The Extended Phase Space}

Now, denote by $q$ coordinates on the manifold $M$, by $p$ the corresponding tangent coordinates, and by $g$ the Riemannian metric. Recall that the geodesic flow is the flow $\phi_t$ associated with the Hamiltonian function $H(q,p) = \frac{1}{2} g(q) (p,p)$. We now introduce a new variable $\theta \in \T$, that represents time, and its symplectic conjugate $\Theta \in \R$. Denote by $\M^* = \M \times \T \times \R$ the \emph{extended phase space}, and by 
\begin{equation}\label{eq_extendedsymp} 
\omega^* = \omega + d \theta \wedge d \Theta
\end{equation}
the \emph{extended symplectic form} on $\M^*$ where $\omega$ is the symplectic form on $\M = TM$. Define the \emph{extended Hamiltonian} $H^* : \M^* \to \R$ by
\begin{equation} \label{eq_extendedunperturbedham}
H^*(q,p,\theta,\Theta) = H (q,p) + \Theta. 
\end{equation}
Denote by $\phi^*_t$ the flow associated with this Hamiltonian. For $(q,p,\theta, \Theta) \in \M^*$ we have $\phi^*_t (q,p,\theta, \Theta) = (\phi_t(q,p), \theta + t, \Theta)$. It follows that the manifold 
\[
\Lambda^* = \left\{ ( q,p,\theta, \Theta) \in \M^* : (q,p) \in \Lambda \right\}
\]
defines a normally hyperbolic invariant manifold for $\phi^*_t$, and is equipped with natural coordinates $(\f,J,\theta, \Theta)$. Moreover, the manifold 
\[
\Gamma^* = \left\{ ( q,p,\theta, \Theta) \in \M^* : (q,p) \in \Gamma \right\}
\]
is a homoclinic channel, and so we have a scattering map $s^* = \pi_s^* \circ \left( \left. \pi_u^* \right|_{\Gamma^*} \right)^{-1}: \pi_u^* (\Gamma^*) \to \pi_s^* (\Gamma^*)$ where $\pi^*_{s,u} : W^{s,u} (\Lambda^*) \to \Lambda^*$ are the holonomy maps related to the normally hyperbolic invariant manifold $\Lambda^*$. It is not hard to see that $s^*$ is given by 
\begin{equation}\label{eq_unperturbedscattering}
s^*( \f ,J,\theta,\Theta) = (\f + \Delta, J, \theta, \Theta). 
\end{equation}
It is clear that Theorem \ref{theorem_main1} is equivalent to the following theorem. 
\begin{theorem}\label{theorem_main2}
There are $C^{\infty}$ time-periodic embeddings $G_{\e,t} : M \to \R^3$ for which there exists $(q_0,p_0,\theta_0,\Theta_0)$, $(q_1,p_1,\theta_1,\Theta_1) \in \M^*$ such that 
\[
\Theta_0 (t) \to - \infty \quad \mathrm{as} \quad t \to \infty
\]
and
\[
\liminf_{t \to \infty} \Theta_1 (t) = - \infty, \quad \limsup_{t \to \infty} \Theta_1 (t) = c^* \in \R
\]
where $\phi^*_{\e,t} (q_j,p_j,\theta_j,\Theta_j) = (q_j(t),p_j(t),\theta_j(t),\Theta_j(t))$ with $\phi^*_{\e,t}$ denoting the flow of the Hamiltonian function 
\[
H_{\e}^* (q,p,\theta, \Theta) = g_{\e}(q,\theta) (p,p) + \Theta
\] 
where $g_{\e}(\cdot,t)=\left( G_{\e,t} \right)^* \left( \left. \delta \right|_{M_{\e,t}}\right)$. Moreover the initial speed $J_j = \| p_j \|$ is of order 1. 
\end{theorem}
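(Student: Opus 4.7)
The plan is to realise the strategy already sketched in the introduction within the extended phase space formalism just developed. First I would fix the closed geodesic $\gamma$, the transverse homoclinic $\xi$, a homoclinic point $\xi(0)$ on $M$, and a small neighbourhood $\U$ of $\xi(0)$ in which $\xi$ has no self-intersection. Using Fermi coordinates adapted to $\xi$ on $\U$ (as invoked in Section \ref{section_perturbations}), I would design a $C^\infty$, $2\pi$-periodic, $\varepsilon$-small perturbation $\bar g(q,\theta)$ of the metric whose support is contained in $\U \times \T$ and which has \emph{zero time average}, i.e.\ $\int_0^{2\pi} \bar g(q,\theta)\,d\theta \equiv 0$. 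The zero-average condition will guarantee that the inner dynamics on the NHIM is unaffected at first order (the averaged perturbation vanishes), so the entire first-order drift in $J$ is produced by the scattering map. I would then verify, using the content of Section \ref{section_perturbations}, that such a perturbation of the metric can be realised by a genuine embedding $G_{\varepsilon,t}:M\to\R^3$.

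Next I would study the perturbed extended Hamiltonian $H_\varepsilon^*(q,p,\theta,\Theta)=\tfrac12 g_\varepsilon(q,\theta)(p,p)+\Theta$. Since this Hamiltonian is autonomous, $H_\varepsilon^*$ is a conserved quantity, and consequently any orbit along which $\|p\|\to\infty$ must have $\Theta\to-\infty$; this is the key mechanism translating energy growth in Theorem \ref{theorem_main1} into the statement of Theorem \ref{theorem_main2}. By normal hyperbolicity applied to $\Lambda^*$ on any compact subcylinder $\{J_0\le J\le J_1\}$, the cylinder persists as a $C^r$ NHIM $\Lambda^*_\varepsilon$ with $C^r$ stable and unstable manifolds; uniformity in $J$ follows from the $J$-homogeneity of $H$ and the fact that the perturbation is bounded in $C^\infty$. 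The inner dynamics on $\Lambda^*_\varepsilon$, in suitably chosen coordinates $(\varphi,J,\theta)$, remains an $O(\varepsilon)$ perturbation of $\dot\varphi=J,\ \dot J=0,\ \dot\theta=1$, and the zero-average assumption forces the first-order inner Hamiltonian to vanish after averaging over $\theta$.

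The heart of the argument is the computation and exploitation of the perturbed scattering map $s^*_\varepsilon = s^* + \varepsilon\, s^*_1 + O(\varepsilon^2)$ on $\Lambda^*_\varepsilon$. Using Section \ref{section_perturbations}, $s^*_1$ is the Hamiltonian flow time $1$ (with respect to $d\varphi\wedge dJ + d\theta\wedge d\Theta$) of a Melnikov-type potential $\L(\varphi,J,\theta)$ obtained by integrating the perturbation along the unperturbed homoclinic in Fermi coordinates. By choosing the shape of $\bar g$ in $\U$, one can arrange $\partial_\theta \L \not\equiv 0$ on an open set of $(\varphi,J,\theta)$, which at first order shifts $J$ by $-\varepsilon\, \partial_\theta \L$ at each passage through the homoclinic channel. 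Composing the scattering map with the inner flow produces a pseudo-orbit along which $J$ increases monotonically to infinity; alternating the scattering map with inner segments that return to a prescribed compact window in $J$ before eventually escaping gives a pseudo-orbit with $\liminf J$ bounded and $\limsup J=\infty$. Both pseudo-orbits live in a sequence of correctly-aligned rectangles in $\Lambda^*_\varepsilon$, and the shadowing theorems of \cite{gidea2006topological} for NHIMs with scattering maps produce genuine orbits $(q_j(t),p_j(t),\theta_j(t),\Theta_j(t))$ of $\phi^*_{\varepsilon,t}$ that track them. Translating $J\to\infty$ back into $\Theta\to-\infty$ via conservation of $H_\varepsilon^*$ then yields the two orbits required by Theorem \ref{theorem_main2}, with $J_j=\|p_j\|$ of order one by construction.

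The main obstacle will be the explicit first-order computation of $s^*_\varepsilon$ and the verification that the Melnikov potential $\L$ is genuinely non-trivial: one must set up Fermi coordinates along $\xi$ inside $\U$, write the pulled-back metric and its perturbation in those coordinates, derive the correct reduced action integral, and confirm that a suitable choice of $\bar g$ with zero time average makes $\partial_\theta \L$ non-vanishing on an open set of $(\varphi,\theta)$ for each large $J$. A secondary but important technical point is the uniformity of all constructions as $J\to\infty$: the NHIM, its invariant manifolds, the homoclinic channel, and the estimates entering the shadowing lemma must all be controlled on the non-compact cylinder, which I would handle by exploiting the exact scaling symmetry $(p,t)\mapsto(\lambda p,t/\lambda)$ of the unperturbed geodesic flow together with the localised nature of the perturbation.
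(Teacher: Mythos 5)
Your proposal follows the same overall strategy as the paper: localize a zero-time-average perturbation of the metric near a homoclinic point $\xi(0)$, realize it as a fluctuating surface, compute the first-order scattering map via a Melnikov-type potential, build a transition chain by composing inner dynamics with the scattering map, and invoke the shadowing theorem of \cite{gidea2006topological}. The conversion between energy growth and $\Theta\to-\infty$ via conservation of the autonomous extended Hamiltonian $H_\e^*$, and the use of the scaling symmetry to control large $J$, also match the paper's argument (see the remark following Lemma \ref{lemma_scattering}).

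Two points in your proposal are worth flagging. First, you attribute the triviality of the first-order inner dynamics on $\Lambda^*$ to the zero time-average of $\bar g$. In the paper the mechanism is cleaner and stronger: the support of $\bar g$ is disjoint from $\gamma$, so $\Lambda^*$ is \emph{exactly} invariant for the perturbed flow and the inner vector field is unperturbed to all orders; the zero average of $\chi$ plays a different role, namely ensuring that the scattering-map potential has both signs in $\theta$, which is what generates both increasing and decreasing jumps in $\Theta$ and hence the oscillatory orbit. Second, and more substantively, you identify the explicit computation and nontriviality of the Melnikov potential $\L$ as the main obstacle but do not indicate how to resolve it; you also write ``for each large $J$'', yet the theorem requires initial speed $J_0$ of \emph{order one}. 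The paper's device here is to take the $x_0$-profile of the metric perturbation to be a mollifier $\alpha_\rho$ approximating the Dirac delta (formula \eqref{eq_metricpertstructure}). This makes the Melnikov integral collapse, up to remainders $R^j_\rho$, to the closed-form $J\chi'\bigl(\theta+(a_--\f)/J\bigr)$, and the key estimate in Lemma \ref{lemma_scattering} is that $R^j_\rho\to 0$ as $\rho\to 0$ \emph{uniformly} in $J\ge J_0$ for any fixed $J_0>0$. It is precisely this uniformity, obtained by choosing $\rho$ small once and for all, that lets the drift mechanism operate starting at $J_0=O(1)$; without it, the standard Melnikov argument only controls the regime of large $J$. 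A minor further imprecision: the scattering map shifts $\Theta$ by $-\e\,\partial_\theta\L$ (plus higher-order terms), and $J$ then changes as a consequence of the energy constraint $J=\sqrt{2(E_0-\Theta)}$ on the Poincar\'e section, rather than $J$ being shifted directly by $-\e\,\partial_\theta\L$.
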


\begin{remark}
Results of this type typically require initial speed or energy to be very large (see for example \cite{delshams2006orbits,dettmann2018splitting}); however the perturbative technique used in this paper (see Lemma \ref{lemma_scattering}) allows the starting speed to be $O(1)$. 
\end{remark}

\section{Nonautonomous Perturbations of the Metric}\label{section_perturbations}

We now assume that $M$ is a smooth closed surface in $\R^3$ (but this construction can easily be extended to higher dimension), with a Riemannian metric $g$ defined by the restriction of the standard inner product on $\R^3$ to $M$. Moreover we assume that $M$ has a hyperbolic closed geodesic $\gamma$ with a transverse homoclinic geodesic $\xi$. We now introduce a $C^{\infty}$ nonautonomous perturbation $\bar g : M \times \R \to \R^{2 \times 2}$ that is $2 \pi$-periodic in time, and consider the perturbed metric
\[
g_{\e} = g + \e \bar g
\]
for small $\e > 0$. Denote by $H_{\e} = H + \e \bar H$ the perturbed Hamiltonian of the geodesic flow, where $\bar H (q,p,t) = \bar{g} (q,t)(p,p)$. Lifting this problem to the extended phase space, we obtain the Hamiltonian $H_{\e}^* : \M^* \to \R$ given by $H_{\e}^* =H^* + \e \bar H^*$ where $H^*$ is defined by \eqref{eq_extendedunperturbedham}, and the Hamiltonian $\bar H^*$ is 
\begin{equation}\label{eq_hampert}
\bar H^* (q,p,\theta,\Theta)= \frac{1}{2} \bar g (q, \theta) (p,p). 
\end{equation}
Denote by $\phi^*_{\e, t} : \M^* \to \M^*$ the flow of the Hamiltonian $H^*_{\e}$.

\subsection{Computation of the Perturbed Scattering Map}\label{eq_scatteringmapcomp}

Assume that $\bar g$ is supported away from the hyperbolic closed geodesic $\gamma$. Then the manifold $\Lambda^*$ remains a normally hyperbolic invariant manifold for the perturbed flow $\phi^*_{\e, t}$. Since the perturbation is small, the homoclinic channel persists as a manifold $\Gamma^*_{\e}$ that is $O(\e)$-close to $\Gamma^*$, and so we have a perturbed scattering map $s_{\e}^*$ defined on a subset of $\Lambda^*$. Define the vector field $\S^*_{\e}$ on $\pi^*_{\epsilon,s}(\Gamma_{\e}^*)$ by the equation $\frac{d}{d \e} s^*_{\e} = \S^*_{\e} \circ s^*_{\e}$. Denote by $\Omega$ the standard $4 \times 4$ symplectic matrix. It was shown in \cite{delshams2008geometric} (see Theorem 32) that $\S^*_{\e} = \Omega \nabla S^*_{\e}$ where the Hamiltonian function $S^*_{\e}$ is given by
\begin{equation}\label{eq_hamofpertofscatteringmap}
S^*_{\e} (\f,J,\theta, \Theta) = \int_{- \infty}^{+\infty} \bar{H}^* \circ \phi^*_{\e,t} \circ \left( \left. \pi^*_{\e,s} \right|_{\Gamma^*_{\e}} \right)^{-1} (\f,J,\theta, \Theta) \, dt. 
\end{equation}
It follows that we can write 
\begin{equation}\label{eq_perturbedscatteringmap}
s^*_{\e} (\f,J,\theta, \Theta) = s^* (\f,J,\theta, \Theta) + \e \, \left( \Omega \nabla S^* \right) \circ s^*_{\e} (\f,J,\theta, \Theta) + O \left( \e^2 \right)
\end{equation}
with
\[
S^* (\f,J,\theta, \Theta) = S^*_0 (\f,J,\theta, \Theta) = \int_{- \infty}^{+\infty} \bar{H}^* \left( Z (\f,J,\theta, \Theta, t) \right) \, dt
\]
where
\begin{align}
Z (\f,J,\theta, \Theta, t) =& \phi^*_{t} \circ \left( \left. \pi^*_{s} \right|_{\Gamma^*} \right)^{-1} (\f,J,\theta, \Theta) = \phi^*_t \left( \xi (\f - a_+), J \xi' (\f - a_+), \theta, \Theta \right) \nonumber \\
=& \left( \xi (\f + t J - a_+), J \, \xi' (\f + t J - a_+), \theta + t, \Theta \right) \label{eq_homoclinicparametrisation1}
\end{align}
In what follows we compute $S^*$ explicitly for a certain class of perturbations $\bar g$.

Now, by translating the parametrisation of the homoclinic geodesic $\xi$ (and adjusting $a_{\pm}$ accordingly) if necessary, we may assume that $\xi (0)$ is not a point of self-intersection of the curve $\xi$, and does not intersect the curve $\gamma$. We can therefore take a neighbourhood $\U$ of $\xi (0)$ in $M$ such that:
\begin{itemize}
\item
$\U \cap \gamma = \emptyset$;
\item
$\U \cap \xi$ consists only of a small segment of $\xi$ containing $\xi(0)$ in its interior; and
\item
$\xi$ does not self-intersect inside $\U$. 
\end{itemize}
We can therefore introduce in $\U$ Fermi coordinates $x=(x_0,x_1)$ adapted to the geodesic $\xi$, defined as follows. Suppose there is some $\delta_0 > 0$ such that $\U \cap \xi = \xi ((-\delta_0,\delta_0))$; if there is not, we can simply shrink $\U$ slightly. Choose a vector $e_1 \in T_{\xi (0)} M$ such that $\| e_1 \| = 1$ and $g(\xi(0))(\xi'(0),e_1) = 0$. Obtain the vector $e_1 (x_0) \in T_{\xi(x_0)} M$ by parallel transporting $e_1$ along $\xi$. Therefore $\xi'(x_0)$, $e_1(x_0)$ form an orthonormal basis of $T_{\xi(x_0)} M$ for each $x_0 \in (- \delta_0, \delta_0)$. For some sufficiently small $\delta_1 > 0$, define the map $h : (-\delta_0, \delta_0) \times (\delta_1, \delta_1) \to \U$ by 
\begin{equation}\label{eq_fermitransf}
h (x) = \pi \circ \phi_1 ( \xi (x_0), x_1 \, e_1 (x_0))
\end{equation}
where $x=(x_0,x_1)$, $\pi : TM \to M$ is the canonical projection, and $\phi_1$ is the time-1 map of the geodesic flow on $TM$. By shrinking the neighbourhood $\U$ if necessary, we assume that $h$ is a diffeomorphism onto $\U$. Denote by $y=(y_0,y_1)$ the corresponding tangent coordinates. In Fermi coordinates $x=(x_0,x_1)$ the point $\xi (x_0)$ is given by $(x_0,0)$, and in the tangent coordinates $y=(y_0,y_1)$ the vector $J \, \xi'(0)$ is given by $(J,0)$. It follows that, in the coordinates $(x,y,\theta,\Theta)$, the trajectory $Z$ defined by \eqref{eq_homoclinicparametrisation1} takes the form
\begin{equation}\label{eq_homoclinicparametrisation2}
Z(\f,J,\theta,\Theta,t) = \left( \left( \f + t J - a_+,0 \right), \left( J,0 \right), \theta + t, \Theta \right)
\end{equation}
locally, when the $x_0$-component is near $\xi(0)$. 

Choose $\alpha \in C^{\infty} (\R)$ such that $\alpha (x_0) \geq 0$ for all $x_0 \in \R$, $\supp \, \alpha \subset (-1,1)$, and $\int_{-\infty}^{+ \infty} \alpha (x_0) \, d x_0=1$. For $\rho > 0$ define $\alpha_{\rho} (x_0)= \frac{1}{\rho} \alpha \left( \frac{x_0}{\rho} \right)$. Then $\alpha_{\rho} \in C^{\infty}(\R)$, $\supp \, \alpha_{\rho} \subset (-\rho,\rho)$, and $\int_{-\infty}^{+ \infty} \alpha_{\rho} (x_0) \, d x_0=1$. Moreover $\alpha_{\rho}$ approximates the Dirac delta function in the limit as $\rho \to 0$ in the sense of distributions (see Lemma \ref{lemma_scattering}). Later we will fix a value of $\rho$, but for now we simply assume that $\rho < \delta_0$. Choose $\beta \in C^{\infty} (\R)$ such that $\beta (0) = 1$ and $\supp \, \beta \subset (- \delta_1, \delta_1)$. We define the component $\bar{g}_{00} \in C^{\infty} (M \times \T)$ of $\bar g$ by
\begin{equation}\label{eq_metricpertstructure}
\bar{g}_{00}(x,\theta) = 2 \, \alpha_{\rho} (x_0) \, \beta (x_1) \, \chi (\theta)
\end{equation}
where $\chi$ is any function in $C^{\infty}(\T)$. By construction, we have $\supp \, \bar{g}_{00} \subset \U \times \T$. For now, the only assumption we make about $\bar{g}_{ij}$ where $i,j$ are not both 0 is that $\supp \bar{g}_{ij} \subset \U \times \T$. 

\begin{lemma}\label{lemma_scattering}
Assume the component $\bar g_{00}$ of the perturbation of the metric is of the form \eqref{eq_metricpertstructure}. Let $(\f, J, \theta, \Theta) \in \pi^*_{\e,u} (\Gamma^*_{\e})$, and denote by $(\bar \f, \bar J, \bar \theta, \bar \Theta) = s^*_{\e} (\f, J, \theta, \Theta)$ the image of the point $(\f, J, \theta, \Theta)$ under the perturbed scattering map $s^*_{\e}$. Then
\begin{equation}\label{eq_thetascattering}
\bar \Theta = \Theta - \e \, J \, \left( \chi' \left( \theta + \frac{a_- - \f}{J} \right) + R_{\rho}^0 (\f,J,\theta,\Theta) \right) + O \left( \e^2 J \right)
\end{equation}
and
\begin{equation}\label{eq_thetascatteringderivative}
\frac{\partial \bar \Theta}{\partial \theta} = \e \, J \, \left( \chi'' \left( \theta + \frac{a_- - \f}{J} \right) + R_{\rho}^1 (\f,J,\theta,\Theta) \right) + O \left( \e^2 J \right)
\end{equation}
where $R_{\rho}^j$ are smooth families of functions such that
\[
\lim_{\rho \to 0} R_{\rho}^j (\f,J,\theta,\Theta) = 0
\]
as long as $J \geq J_0$ for some $J_0 > 0$, and where the rate of convergence of the limit is independent of $\f,\theta,\Theta$, and depends only on the lower bound $J_0$ of $J$. 
\end{lemma}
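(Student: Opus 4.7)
The plan is to expand the perturbed scattering map to first order in $\e$ via \eqref{eq_perturbedscatteringmap}, compute the generating function $S^*$ in closed form using the Fermi-coordinate expression \eqref{eq_homoclinicparametrisation2} for the unperturbed homoclinic trajectory together with the explicit structure \eqref{eq_metricpertstructure} of $\bar g_{00}$, and then read off $\bar\Theta$ from the $\Theta$-component of $\Omega \nabla S^*$, which in the coordinates $(\f, J, \theta, \Theta)$ equals $-\partial_\theta S^*$. Since the correction in \eqref{eq_perturbedscatteringmap} already carries a factor of $\e$, I can replace $s_\e^*$ by $s^*$ in the argument of $\Omega \nabla S^*$ at the cost of an $O(\e^2 J)$ error.

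To evaluate $S^*$ I would observe that along the unperturbed homoclinic trajectory $Z$ the momentum in Fermi coordinates is $y = (J, 0)$, so the only component of $\bar g$ that contributes to $\bar g(y,y)$ is $\bar g_{00}$; the components $\bar g_{01}$ and $\bar g_{11}$ drop out since $y_1 = 0$. Combined with \eqref{eq_metricpertstructure} and $\beta(0) = 1$, this gives $\bar H^*(Z(t)) = J^2 \alpha_\rho(\f + tJ - a_+) \chi(\theta + t)$, which is supported in a $t$-interval of length $2\rho/J$ around $t = (a_+ - \f)/J$. Applying the change of variables $u = \f + tJ - a_+$ to \eqref{eq_hamofpertofscatteringmap} then yields
\[
S^*(\f, J, \theta, \Theta) = J \int_{-\rho}^{\rho} \alpha_\rho(u) \, \chi\!\left( \theta + \frac{u + a_+ - \f}{J} \right) du.
\]
Taylor-expanding $\chi$ about $u = 0$ and using $\int \alpha_\rho = 1$ gives $S^* = J \, \chi(\theta + (a_+ - \f)/J) + J \tilde R_\rho$ with $|\tilde R_\rho| \leq \|\chi'\|_\infty \, \rho / J_0$ uniformly. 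Differentiating in $\theta$, evaluating at $s^*(\f, J, \theta, \Theta) = (\f + \Delta, J, \theta, \Theta)$ with $\Delta = a_+ - a_-$ (so the argument $\theta + (a_+ - \f)/J$ becomes $\theta + (a_- - \f)/J$), and substituting into the $\Theta$-component of \eqref{eq_perturbedscatteringmap} yields \eqref{eq_thetascattering}. Formula \eqref{eq_thetascatteringderivative} then follows by differentiating this expression once more in $\theta$, with the remainder $R_\rho^1$ controlled by the analogous Taylor estimate at one higher order of $\chi$.

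The main technical point I expect to be the real obstacle is justifying the $O(\e^2 J)$ remainder rather than the generic $O(\e^2)$, since this scaling is what allows the effect on $\Theta$ to remain visible as $J$ grows. The correct scaling follows because $\bar H^*$ is quadratic in momentum (so it scales as $J^2$ along the trajectory) while the transit time through $\supp \bar g$ scales as $1/J$, so $S^*$ and each higher-order contribution in the $\e$-expansion of $s_\e^*$ inherits a single net factor of $J$. The secondary subtlety is the uniform convergence $R_\rho^j \to 0$, but this follows immediately from the smoothness of $\chi$ and the explicit bound $|\chi^{(k)}(A + u/J) - \chi^{(k)}(A)| \leq \|\chi^{(k+1)}\|_\infty \, \rho / J_0$ for $|u| < \rho$ and $J \geq J_0$, which depends on none of $\f, \theta, \Theta$.
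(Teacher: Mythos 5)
Your proposal follows the same route as the paper's proof: it reduces the computation to the reduced Poincar\'e function $S^*$ via \eqref{eq_perturbedscatteringmap}, uses the Fermi-coordinate parametrisation \eqref{eq_homoclinicparametrisation2} and the localised structure \eqref{eq_metricpertstructure} (with $\beta(0)=1$ and $y_1=0$ killing the other components of $\bar g$) to obtain $\bar H^*(Z(t))=J^2\alpha_\rho(\f+tJ-a_+)\chi(\theta+t)$, substitutes $u=\f+tJ-a_\pm$ to expose the single surviving factor of $J$, treats $\alpha_\rho$ as an approximate delta to extract $\chi^{(k)}\bigl(\theta+\tfrac{a_--\f}{J}\bigr)$, and tracks the $O(\e^2 J)$ remainder via the same $dt=du/J$ scaling observation. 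The only cosmetic deviation is that you obtain the uniform smallness of $R^j_\rho$ from an explicit Lipschitz/Taylor bound $\|\chi^{(k+1)}\|_\infty\,\rho/J_0$ rather than the paper's $\hat\e$--$\sigma$ continuity argument (and you should either differentiate under the integral sign before Taylor-expanding, or note that $\partial_\theta\tilde R_\rho$ satisfies the analogous bound with one more derivative of $\chi$, as you do implicitly for \eqref{eq_thetascatteringderivative}), but the decomposition and all the key observations coincide.
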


\begin{proof}
By \eqref{eq_perturbedscatteringmap} we have
\begin{equation}\label{eq_scatteringfirstthetaformula}
\bar \Theta = \Theta - \e \, \frac{\partial S^*}{\partial \theta} (\f + \Delta, J, \theta, \Theta) + O \left( \e^2 \right). 
\end{equation}
Recall $Z(\f,J,\theta,\Theta)$ takes the form \eqref{eq_homoclinicparametrisation2} whenever the projection onto $M$ of $Z(\f,J,\theta,\Theta)$ lies in $\U$. Therefore since $\bar g$, and thus $\bar H^*$, is supported only when the $x$-component lies in $\U$, we can write
\begin{align*}
\bar H^* \left( Z(\f,J,\theta,\Theta, t) \right) =& \frac{1}{2} \bar g (( \f + t J - a_+,0), \theta + t)((J,0),(J,0)) \\
=& \frac{1}{2} \bar{g}_{00} (( \f + t J - a_+,0), \theta + t) J^2 \\
=& \alpha_{\rho} \left(\f + t J - a_+ \right) \beta(0) \chi (\theta + t) J^2 \\
=& \alpha_{\rho} \left(\f + t J - a_+ \right)  \chi (\theta + t) J^2,
\end{align*}
and so
\[
S^*(\f,J,\theta,\Theta) = \int_{-\infty}^{+\infty} \alpha_{\rho} (\f + t J - a_+) \chi(\theta + t) J^2 \, dt
\]
which implies that
\begin{equation}\label{eq_thetaderivativeofs}
\frac{\partial S^*}{\partial \theta} (\f + \Delta,J,\theta,\Theta) = J^2 \int_{-\infty}^{+\infty} \alpha_{\rho} (\f + t J - a_-) \chi'(\theta + t)  \, dt
\end{equation}
where we have used the fact that $\Delta = a_+ - a_-$. 

We now claim that 
\[
\lim_{\rho \to 0} J \int_{-\infty}^{+\infty} \alpha_{\rho} (\f + t J - a_-) \chi'(\theta + t)  \, dt = \chi' \left( \theta + \frac{a_- - \f}{J} \right) 
\]
where the rate of convergence depends only on the lower bound $J_0$ of $J$. Indeed, write $t^* = \frac{a_- - \f}{J}$, define
\[
I_{\rho} = \left| J \int_{- \infty}^{+ \infty} \alpha_{\rho} (\f + t J - a_-) \chi' (\theta + t) \, dt - \chi' (\theta + t^*) \right|,
\]
and choose some $\hat \e >0$. If we can find $\hat \rho >0$, depending only on $J_0$, $\hat \e$, such that $I_{\rho} \leq \hat \e$ for all $\rho \in (0,\hat \rho]$, then the claim is proved. Using the fact that the integral over $\R$ of $\alpha_{\rho}$ is 1, and making the substitution $u = \f + t J - a_-$, we see that
\[
I_{\rho} = \left| \int_{- \infty}^{+ \infty} \alpha_{\rho} (u) \left( \chi' \left(\theta + t^* + \frac{u}{J} \right) - \chi' (\theta + t^*) \right) \, du  \right|.
\]
Since $\chi \in C^{\infty} (\T)$, we can choose $C>0$ such that $\left| \chi' (\theta) \right| \leq C$ for all $\theta \in \T$, and we can choose $\sigma > 0$, independent of $\f, J, \theta, \Theta$, such that if $|u| \leq \sigma J$ then
\begin{equation}\label{eq_chiprimecontinuity}
\left| \chi' \left( \theta + t^* + \frac{u}{J} \right) - \chi' ( \theta + t^* ) \right| \leq \hat \e. 
\end{equation}
Therefore, using again the fact that the integral over $\R$ of $\alpha_{\rho}$ is 1, we obtain
\begin{align}
I_{\rho} \leq& \int_{| u | \leq \sigma J} \alpha_{\rho} (u) \, \left| \chi' \left( \theta + t^* + \frac{u}{J} \right) - \chi' ( \theta + t^* ) \right| \, du \nonumber \\
& \quad + \int_{| u | > \sigma J} \alpha_{\rho} (u) \, \left| \chi' \left( \theta + t^* + \frac{u}{J} \right) - \chi' ( \theta + t^* ) \right| \, du \nonumber \\
\leq& \, \hat{\e} + 2 \, C \, K_{\rho} \label{eq_integraldiffbound}
\end{align}
where
\[
K_{\rho} = \int_{| u | > \sigma J} \alpha_{\rho} (u) \,  du = \int_{| u | > \sigma J} \frac{1}{\rho} \, \alpha \left( \frac{u}{\rho} \right) \,  du = \int_{| v | > \frac{ \sigma J}{\rho}} \alpha ( v ) \, dv \leq \int_{| v | > \frac{ \sigma J_0}{\rho}} \alpha ( v ) \, dv
\]
since $\alpha $ is a positive function and since $J \geq J_0$, where we have made the substitution $v = \frac{u}{\rho}$. Choose any $\hat \rho \in (0, \sigma J_0]$. Clearly $\hat \rho$ depends only on $\hat \e$, $J_0$ since we can assume that $\sigma = \sigma \left( \hat \e \right)$ is chosen to be the largest number for which \eqref{eq_chiprimecontinuity} is satisfied for all $|u| \leq \sigma J$. Moreover, since $\alpha$ is a positive function, and since $\supp \alpha \subset (-1,1)$, for any $\rho \in (0, \hat \rho]$ we have
\[
K_{\rho} \leq \int_{| v | > \frac{ \sigma J_0}{\hat \rho}} \alpha (v) \, dv \leq \int_{| v | >1} \alpha (v) \, dv = 0. 
\]
Combining this with \eqref{eq_integraldiffbound}, we obtain $I_{\rho} \leq \hat \e$ for any $\rho \in (0, \hat \rho ]$, where $\hat \rho$ depends only on $J_0$, $\hat \e$, so the claim is proved. 

To complete the proof of formula \eqref{eq_thetascattering}, notice that, in order to compute the integral of the term of order $\e^k$ for $k \geq 2$, we will again have to make a substitution of the form $u = \f + t J - a_- + \cdots$, which yields $du = (J + \cdots) \, dt$ so we always lose a factor of $J$ from equation \eqref{eq_hamofpertofscatteringmap}. Therefore the error terms are indeed of order $\e^2 J$. Combining this fact with \eqref{eq_scatteringfirstthetaformula}, \eqref{eq_thetaderivativeofs}, and the claim proved above, we thus obtain \eqref{eq_thetascattering}, as required. 

In order to prove \eqref{eq_thetascatteringderivative} we simply differentiate \eqref{eq_thetaderivativeofs} with respect to $\theta$ and repeat the above procedure. 

\end{proof}

\begin{remark}
As Lemma \ref{lemma_scattering} will be used in the proof of Theorem \ref{theorem_main2}, we will be dealing with situations in which $| \Theta |$, $\| p \|$, and $J$ are very large. This can be delicate in perturbation-theoretic settings such as this one, where there are terms of order $\e$. The typical way of dealing with this is to rescale variables, the Hamiltonian, the symplectic form, and time, and to work with the rescaled system \cite{delshams2006orbits}. In our case, if $J = O ( \e^{-n} )$ for $n \in \mathbb{N}$ then we can rescale: variables via $\tilde p = \e^n p$, $\tilde J = \e^n J$, $\tilde \Theta = \e^n \Theta$; the Hamiltonian by $\tilde H_{\e} = \e^{2n} H_{\e}$; the symplectic forms by $\tilde \omega = \e^n \omega$ and $\tilde \omega^* = \e^n \omega^*$; and time by $\tilde t = \e^n t$. In this way we obtain the same dynamical system as the case when $J = O(1)$ due to the homogeneity of the Hamiltonian $H_{\e}$ in momenta. This proves the consistency of the results of Lemma \ref{lemma_scattering} when $\| p \|$ is large. 
\end{remark}

\subsection{Realising the Perturbed Metric as a Fluctuating Surface}

In this section we use results from \cite{clarke2019generic} to construct a fluctuating surface whose geodesic flow is equivalent to a nonatuonomous metric on the static unperturbed hypersurface whose first component is of the form \eqref{eq_metricpertstructure}. Suppose the hypersurface $M$ is of the form 
\[
M = \left\{ z \in \mathbb{R}^{n+1} : Q(z) = 0 \right\}
\]
for some $Q \in C^{\infty} \left( \R^{n+1} \right)$. Then the \emph{normal curvature} of $M$ in the direction $v \in T_z M$ is defined by 
\begin{equation}\label{eq_normalcurvature}
\kappa (z, v) = \sum_{i,j=0}^n \frac{\partial^2 Q}{\partial z_i \, \partial z_j} \left( z \right) \, v_i \, v_j. 
\end{equation}
The periodicity of the geodesic $\gamma$ implies that there is some $\f^* \in \T$ such that $\kappa \left( \gamma ( \f^* ), \gamma' ( \f^* ) \right) \neq 0$ \cite{clarke2019generic}. It follows that there is an open neighbourhood $V_0$ of $\left( \gamma ( \f^* ), \gamma' ( \f^* ) \right)$ in the unit tangent bundle $T^1 M$ such that $\kappa$ does not vanish on $V_0$. Since $\xi$ is homoclinic to $\gamma$, by translating its parametrisation if necessary, we can assume that $(\xi (0), \xi' (0)) \in V_0$. Shrinking $\delta_0$ if necessary (where $\delta_0$ was defined in Section \ref{eq_scatteringmapcomp}) we can thus assume that $(\xi (x_0), \xi'(x_0)) \in V_0$ for all $x_0$ satisfying $|x_0| \leq \delta_0$. 

Now, let $\psi \in C^{\infty} (\R^{n+1} \times \R)$ such that $\psi (z,t+2 \pi) = \psi (z,t)$ for all $(z,t) \in \R^{n+1} \times \R$ and such that $M \cap \, \supp \psi \subset \U$, where the set $\U$ was defined in Section \ref{eq_scatteringmapcomp}. Consider the $2 \pi$-time periodic fluctuating surface
\[
M_{\e,t} = \left\{ z \in \mathbb{R}^{n+1} : Q(z) + \e \, \psi (z,t)= 0 \right\}. 
\]
Denote by 
\begin{equation}\label{eq_unitnormal}
n(z) = - \frac{ \nabla Q(z)}{\| \nabla Q (z) \|}
\end{equation}
the inward-pointing unit normal to $M$ at $z$. The following lemma is equivalent to a lemma in \cite{clarke2019generic}. 
\begin{lemma}
There is a $C^{\infty}$ family of embeddings $G_{\e,t} : M \hookrightarrow \R^{n+1}$ such that
\[
G_{\e,t} (M) = M_{\e,t}, \quad G_{\e,t} (z) = z + \e \, \psi (z,t) \, n(z) + O ( \e^2)
\]
for all sufficiently small values of $\e$, where $n(z)$ is the unit normal, defined by \eqref{eq_unitnormal}. 
\end{lemma}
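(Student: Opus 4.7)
My plan is to construct $G_{\e,t}$ by normal projection: for each $z \in M$ I seek a smooth scalar $\tau(z,t,\e)$ such that $z + \tau \, n(z)$ lies on $M_{\e,t}$, i.e.
\[
F(z,t,\e,\tau) := Q\bigl( z + \tau \, n(z) \bigr) + \e \, \psi\bigl( z + \tau \, n(z), t \bigr) = 0.
\]
Since $Q(z) = 0$ on $M$ and
\[
\partial_{\tau} F \bigl|_{\e = 0, \, \tau = 0} = \nabla Q(z) \cdot n(z) = - \| \nabla Q(z) \| \neq 0,
\]
the implicit function theorem furnishes a unique smooth $\tau(z,t,\e)$, $2\pi$-periodic in $t$, defined on all of $M \times \T$ for $\e$ in some neighbourhood of zero; compactness of $M$ (and of $\T$) makes this neighbourhood uniform. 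I then set $G_{\e,t}(z) := z + \tau(z,t,\e) \, n(z)$, which is $C^\infty$ in all variables.

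Taylor expanding the defining equation gives $- \tau \, \| \nabla Q(z) \| + \e \, \psi(z,t) + O(\e \tau, \tau^2) = 0$, hence
\[
\tau(z,t,\e) = \frac{ \e \, \psi(z,t) }{ \| \nabla Q(z) \| } + O(\e^2),
\]
which yields the announced leading order for $G_{\e,t}$ up to the factor $\| \nabla Q(z) \|^{-1}$. This factor is absorbed by normalising $Q$ so that $\| \nabla Q \| \equiv 1$ along $M$ (equivalently, by choosing $Q$ to be a signed distance function near $M$, as is standard in this setting), after which the formula matches the statement exactly.

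To verify that $G_{\e,t}$ is an embedding, note $G_{0,t} = \mathrm{id}_M$ and the implicit function theorem provides $C^k$-control on $\tau$ in $z$ for every $k$, so $G_{\e,t} \to \mathrm{id}_M$ in $C^\infty(M, \R^{n+1})$ uniformly in $t$ as $\e \to 0$. Since the space of embeddings of a compact manifold is $C^1$-open, $G_{\e,t}$ remains an embedding for all sufficiently small $\e$. The inclusion $G_{\e,t}(M) \subseteq M_{\e,t}$ holds by construction; for the reverse inclusion, the tubular neighbourhood theorem applied to $M$ shows that, for small $\e$, the hypersurface $M_{\e,t}$ lies entirely in a normal tubular neighbourhood of $M$, so each of its points is $G_{\e,t}(z)$ for a unique $z \in M$. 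Smooth dependence on $t$ and $2\pi$-periodicity are inherited from $\psi$ via the implicit function theorem.

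The only delicate issue I anticipate is cosmetic rather than mathematical: the stated leading order omits the factor $\| \nabla Q(z) \|^{-1}$, so one must fix a normalisation of $Q$ (or equivalently, of $\psi$) at the outset for the displayed formula to read as written. All remaining ingredients---existence and smoothness of $\tau$, periodicity in $t$, the embedding property, and surjectivity onto $M_{\e,t}$---are standard consequences of the implicit function theorem combined with the compactness of $M$ and the tubular neighbourhood theorem, and so do not require any new ideas beyond the cited reference \cite{clarke2019generic}.
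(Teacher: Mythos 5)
Your proof is correct and is the standard implicit-function-theorem argument for such statements; the paper itself gives no proof here, deferring to a lemma in \cite{clarke2019generic}. Your observation about the $\|\nabla Q(z)\|^{-1}$ factor is accurate: the Taylor expansion of $Q(z+\tau n(z)) + \varepsilon\psi(z+\tau n(z),t)=0$ gives $\tau = \varepsilon\psi(z,t)/\|\nabla Q(z)\| + O(\varepsilon^2)$, so the displayed leading order is exact only under a normalisation such as $\|\nabla Q\|\equiv 1$ along $M$ (e.g.\ taking $Q$ to be the signed distance function near $M$), or equivalently by absorbing the smooth positive factor into $\psi$. This is harmless for the role the lemma plays in the paper, since $\psi$ is ultimately chosen freely via \eqref{eq_surfaceperturbationformula}, so any smooth nonvanishing factor multiplying the normal displacement can be compensated there. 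The remaining ingredients you invoke (compactness of $M\times\mathbb{T}$ for a uniform $\varepsilon$-range, openness of embeddings in $C^1$, uniqueness of the normal-line intersection with $M_{\varepsilon,t}$ inside a tubular neighbourhood) are all correctly deployed.
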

Denote by $\delta$ the standard metric on $\R^{n+1}$, and consider the pullback $g_{\e} (\cdot,t) =  \left(G_{\e,t} \right)^* \left( \left. \delta \right|_{M_{\e,t}} \right) $ of the restriction of $\delta$ to $M_{\e,t}$ under $G_{\e,t}$. Then $g_{\e}$ defines a nonautonomous metric on the static unperturbed manifold $M$. The following lemma is equivalent to a result from \cite{clarke2019generic}. 
\begin{lemma}
The Fermi coordinates $x$ on the subset $\U$ of $M$ define coordinates on a subset of $M_{\e,t}$ via the map $G_{\e,t} \circ h : (- \delta_0,\delta_0) \times (- \delta_1,\delta_1) : M \to M_{\e,t}$, and the pullback metric in Fermi coordinates is 
\[
g_{\e} (\cdot,t) =  \left(G_{\e,t} \circ h \right)^* \left( \left. \delta \right|_{M_{\e,t}} \right) = g ( \cdot ) + \e \, \bar g ( \cdot, t) + O(\e^2)
\]
where the map $h$ is defined by \eqref{eq_fermitransf}, $g$ is the unperturbed metric, and the components of $\bar g$ are
\begin{equation}\label{eq_fermisurfacepert}
\bar g_{ij} (x,t) = 2 \, \psi \left( h (x), t \right) \tilde C_{ij} (x)
\end{equation}
where $\tilde C_{ij}$ are $C^{\infty}$ functions of $x=(x_0,x_1, \ldots, x_n)$ such that 
\begin{equation}\label{eq_fermipertsurfcomps}
\tilde C_{00} (x_0, 0, \ldots, 0) = - \kappa \left( \xi (x_0), \xi' (x_0) \right)
\end{equation}
where the normal curvature $\kappa$ is defined by \eqref{eq_normalcurvature}.
\end{lemma}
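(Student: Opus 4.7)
The plan is to expand the pullback metric in powers of $\e$, starting from the explicit form of $G_{\e,t}$ given by the preceding lemma. The first claim, that $G_{\e,t} \circ h$ defines coordinates on an open subset of $M_{\e,t}$, is immediate for small $\e$: the preceding lemma guarantees $G_{\e,t}$ is a diffeomorphism, and $h$ is a diffeomorphism onto $\U$, so their composition gives a chart. Writing $F_{\e,t}(x) = G_{\e,t}(h(x)) = h(x) + \e \, \psi(h(x),t) \, n(h(x)) + O(\e^2)$, I would compute
\[
\partial_i F_{\e,t} = \partial_i h + \e \, \partial_i \bigl[ (\psi \circ h) \cdot (n \circ h) \bigr] + O(\e^2),
\]
and pull back the Euclidean inner product to obtain the metric components to order $\e$.

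The zeroth-order term $\langle \partial_i h, \partial_j h\rangle$ is exactly $g_{ij}$, the unperturbed pullback metric in Fermi coordinates. For the first-order term, the Leibniz rule gives
\[
\langle \partial_j h, \partial_i ((\psi \circ h)(n \circ h))\rangle = \partial_i(\psi \circ h) \cdot \langle \partial_j h, n \circ h\rangle + (\psi \circ h) \cdot \langle \partial_j h, \partial_i(n \circ h)\rangle.
\]
Because $n$ is normal to $M$ and $\partial_j h$ is tangent, the first summand vanishes identically. The terms involving derivatives of $\psi$ therefore drop out, and the first-order contribution to the metric is $2 \, \psi(h(x),t) \, \tilde C_{ij}(x)$ where
\[
\tilde C_{ij}(x) = \tfrac{1}{2} \bigl( \langle \partial_i h, \partial_j(n \circ h)\rangle + \langle \partial_j h, \partial_i(n \circ h)\rangle \bigr).
\]
This manifestly gives $C^{\infty}$ functions of $x$ independent of $t$, matching \eqref{eq_fermisurfacepert}.

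To identify $\tilde C_{00}$ along the geodesic $\xi$, I would restrict to $x_1 = \cdots = x_n = 0$, where $h(x_0, 0, \ldots, 0) = \xi(x_0)$ and $\partial_0 h = \xi'(x_0)$. Then $\tilde C_{00}|_\xi = \langle \xi'(x_0), \tfrac{d}{dx_0}(n \circ \xi)\rangle$, and differentiating the identity $\langle \xi', n \circ \xi\rangle \equiv 0$ yields $\tilde C_{00}|_\xi = -\langle \xi''(x_0), n(\xi(x_0))\rangle$. Since $\xi$ lies in $\{Q = 0\}$, differentiating $Q(\xi(x_0)) \equiv 0$ twice gives $\langle \nabla Q(\xi), \xi''\rangle = -\mathrm{Hess}(Q)(\xi', \xi') = -\kappa(\xi, \xi')$ by definition \eqref{eq_normalcurvature}. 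Combining this with $n = -\nabla Q/\|\nabla Q\|$ from \eqref{eq_unitnormal} produces $\tilde C_{00}|_\xi = -\kappa(\xi, \xi')/\|\nabla Q\|$, which matches \eqref{eq_fermipertsurfcomps} once $Q$ is normalized so that $\|\nabla Q\| = 1$ on $M$ (as is standard and is done in \cite{clarke2019generic}).

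The main obstacle will be bookkeeping the normalization factor $\|\nabla Q\|$ consistently: the clean formula $\tilde C_{00}|_\xi = -\kappa$ relies on the freedom to replace $Q$ by $Q/\|\nabla Q\|$ in a neighbourhood of $M$ without changing the hypersurface, and one must verify that the $O(\e^2)$ error in $G_{\e,t}$ (which depends on this choice) remains $O(\e^2)$ after the renormalization. Beyond this, all computations are routine Leibniz-rule manipulations, and the cancellation of $\partial_i \psi$ terms via the normality of $n$ is the only substantive algebraic point.
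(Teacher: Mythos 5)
Your computation is correct. The paper does not actually supply a proof of this lemma; it simply cites \cite{clarke2019generic}, so there is no in-paper argument to compare against. Your direct expansion of the pullback $\left(G_{\e,t}\circ h\right)^*\delta$ is the natural route: the Leibniz step that kills the $\partial_i\psi$ terms via $\langle \partial_j h, n\circ h\rangle = 0$ is the essential algebraic observation, and it is exactly right. Your identification $\tilde C_{00}|_\xi = -\langle \xi'', n(\xi)\rangle = -\kappa(\xi,\xi')/\|\nabla Q\|$ is also correct, obtained by differentiating $\langle \xi', n\circ\xi\rangle \equiv 0$ and $Q\circ\xi \equiv 0$ twice.

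You are right to flag the normalization of $Q$ as the one real subtlety: the paper's definition \eqref{eq_normalcurvature} of $\kappa$ is the raw Hessian of $Q$ and is not invariant under rescaling $Q$, while the inner product $-\langle\xi'',n\rangle$ that arises from the pullback computation \emph{is} geometric (it is the second fundamental form evaluated on $\xi'$). So the stated identity $\tilde C_{00}(x_0,0,\ldots,0) = -\kappa(\xi(x_0),\xi'(x_0))$ presupposes the convention $\|\nabla Q\| \equiv 1$ along $M$, which can always be arranged locally by replacing $Q$ with $Q/\|\nabla Q\|$ near $M$; this replacement changes $M_{\e,t}$ only at order $\e^2$ via a correspondingly rescaled $\psi$, so the first-order formulas are unaffected. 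The paper relies on \cite{clarke2019generic} for this convention without restating it, so your proof is, if anything, more explicit and complete than the presentation here.
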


Therefore if we define 
\begin{equation}\label{eq_surfaceperturbationformula}
\psi ( z, t ) = - \kappa \left( \xi \left( h_0^{-1} (z) \right), \xi' \left( h_0^{-1} (z) \right) \right)^{-1} \alpha_{\rho} \left( h_0^{-1} (z) \right) \, \beta \left( h_1^{-1} (z) \right) \, \chi (t)
\end{equation}
for $z \in \U$ where $h_j^{-1}$ are the components of the inverse of the diffeomorphism defined by \eqref{eq_fermitransf}, and extend $\psi$ smoothly to 0 outside a neighbourhood of $\U$ in $M$, then $\bar{g}_{00}$ is of the form \eqref{eq_metricpertstructure} along $\U$ as a result of \eqref{eq_fermisurfacepert} and \eqref{eq_fermipertsurfcomps}, and so the corresponding scattering map can be computed using Lemma \ref{lemma_scattering}.

\section{Construction of Transition Chains for a Specific Perturbation}\label{section_transitionchains}

The purpose of this section is to construct drifting trajectories for certain perturbations of the surface of the form \eqref{eq_surfaceperturbationformula}, where $\alpha_{\rho}$, $\beta$ are as described in Section \ref{eq_scatteringmapcomp}. For concreteness we consider the case when
\begin{equation}\label{eq_chiiscos}
\chi (t) = \cos t.
\end{equation}
Note however that the following argument equally applies to any nonconstant $\chi \in C^{\infty} \left( \T \right)$ with zero average. The idea is to apply the shadowing results of \cite{gidea2006topological}. Since the results of that paper are for maps, we first reduce the flow to a Poincar\'e map before checking the assumptions of the shadowing theorem.

In order to construct a Poincar\'e map, we extend the coordinates $( \f, J, \theta, \Theta)$ on $\Lambda^*$ to $C^r$ coordinates $( \f, J, \theta, \Theta, s, u)$ in a neighbourhood of $\Lambda^*$. Such coordinates are known to exist; for example see \cite{jones2009generalized} for the construction of such a coordinate system that straightens the stable and unstable manifolds $W^{s,u} \left( \Lambda^* \right)$. Therefore the section $\{ \f = 0 \}$ is well-defined in a neighbourhood of $\Lambda^*$. We now fix an energy level $\{ H_{\e}^* = E_0 \}$ of the Hamiltonian function $H_{\e}^* =H^* + \e \bar H^*$ (where $H^*$ is defined by \eqref{eq_extendedunperturbedham} and $\bar H^*$ is the Hamiltonian of the perturbation, defined by \eqref{eq_hampert}), and consider the return map of the flow of $H_{\e}^*$ to the section $\{ \f = 0 \} \cap \{ H_{\e}^* = E_0 \}$ in a neighbourhood of $\Lambda^*$. Denote by $F$ the Poincar\'e map. Then $F$ has a normally hyperbolic invariant manifold $\hat \Lambda = \Lambda^* \cap \{ \f = 0 \} \cap \{ H_{\e}^* = E_0 \}$, and we denote by $f = F|_{\hat \Lambda}$ the restriction to $\hat \Lambda$ of the map $F$. Let us compute the map $f$. As a result of Lemma \ref{lemma_innersympform} we have 
\[
\left. \omega^* \right|_{\Lambda^*} = d \f \wedge dJ + d \theta \wedge d \Theta
\]
where the extended symplectic form $\omega^*$ is defined by \eqref{eq_extendedsymp}. Moreover, due to \eqref{eq_innervf}, and the fact that the Hamiltonian $\bar{H}^*$ of the perturbation is supported away from $\Lambda^*$, the vector field of the Hamiltonian $\left. H_{\e}^* \right|_{\Lambda^*}$ is $\dot \f = J, \, \dot J = 0, \dot \theta = 1, \, \dot \Theta = 0$. From this equation and the structure of the symplectic form $\left. \omega^* \right|_{\Lambda^*}$, it follows that the inner Hamiltonian is
\[
\left. H_{\e}^* \right|_{\Lambda^*} \left( \f, J, \theta, \Theta \right) = \frac{J^2}{2} + \Theta. 
\]
Therefore the energy level $\{ H_{\e}^* = E_0 \}$ in $\Lambda^*$ is defined by the equation $J=\sqrt{2 (E_0 - \Theta)}$, and the variables $\theta, \Theta$ define coordinates on the normally hyperbolic cylinder $\hat \Lambda = \Lambda^* \cap \{ \f = 0 \} \cap \{ H_{\e}^* = E_0 \}$. 

\begin{lemma}\label{lemma_innermap}
The inner map $f : \hat \Lambda \to \hat \Lambda$ is given by 
\begin{equation}\label{eq_innermap}
f : \begin{dcases}
\bar \theta =& \theta + \frac{2 \, \pi}{\sqrt{2 (E_0 - \Theta)}} \\
\bar \Theta =& \Theta
\end{dcases}
\end{equation}
and so $f$ satisfies a twist condition, with $\frac{\partial \bar \theta}{\partial \Theta} = O \left( J^{-3} \right)$. 
\end{lemma}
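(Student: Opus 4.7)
The plan is to integrate the flow on $\Lambda^*$ explicitly, extract the first return map to the section $\{\f = 0\}$, and restrict to the energy level to express the result in the coordinates $(\theta, \Theta)$.

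First I would start from the vector field on $\Lambda^*$ already given in the paragraph preceding the lemma, namely $\dot\f = J$, $\dot J = 0$, $\dot\theta = 1$, $\dot\Theta = 0$. Integrating from an initial condition $(\f = 0, J, \theta, \Theta) \in \hat\Lambda$ gives $\f(t) = J t$, $J(t) = J$, $\theta(t) = \theta + t$, $\Theta(t) = \Theta$. Since $\f$ lives in $\T = \R/2\pi\mathbb{Z}$, the first return to the section $\{\f = 0\}$ occurs at time $t_\mathrm{ret} = 2\pi/J$. Consequently the Poincaré map $F$ sends $(J, \theta, \Theta)$ to $(J, \theta + 2\pi/J, \Theta)$, and this is exactly the restriction $f = F|_{\hat\Lambda}$ up to expressing $J$ in terms of the remaining coordinates.

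Next I would use the computation of the inner Hamiltonian $\left. H_\e^* \right|_{\Lambda^*}(\f, J, \theta, \Theta) = J^2/2 + \Theta$ and the energy constraint $H_\e^* = E_0$ to solve $J = \sqrt{2(E_0 - \Theta)}$ on $\hat\Lambda$. Substituting this into the formula for $f$ immediately yields
\[
\bar\theta = \theta + \frac{2\pi}{\sqrt{2(E_0 - \Theta)}}, \qquad \bar\Theta = \Theta,
\]
which is the claimed formula \eqref{eq_innermap}. Note that because $\bar H^*$ is supported away from $\Lambda^*$, the perturbation makes no contribution to the inner flow, so no error terms arise here.

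Finally, to verify the twist property, I would differentiate $\bar\theta$ with respect to $\Theta$:
\[
\frac{\partial \bar\theta}{\partial \Theta} = \frac{2\pi}{(2(E_0 - \Theta))^{3/2}} = \frac{2\pi}{J^3},
\]
which is nowhere vanishing and of order $J^{-3}$, as claimed. There is no real obstacle in this proof; the only subtlety is observing that the perturbation $\bar H^*$ leaves $\Lambda^*$ invariant and vanishes on it, which is what permits the inner flow to be read off directly from the unperturbed equations \eqref{eq_innervf} augmented by $\dot\theta = 1$, $\dot\Theta = 0$.
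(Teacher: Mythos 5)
Your proof is correct and follows essentially the same route as the paper: compute the return time $\tau = 2\pi/J$ from the inner vector field, substitute $J = \sqrt{2(E_0 - \Theta)}$ using the energy constraint, and differentiate to verify the twist. The only thing the paper states slightly more tersely is the observation that $\bar H^*$ vanishes near $\Lambda^*$ so the inner flow is exactly the unperturbed one, which you also make explicit.
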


\begin{proof}
Denote by $\tau : \hat \Lambda \to \R$ the return time to $\hat \Lambda$ of the flow of the Hamiltonian $H_{\e}^*$. For a point $(\theta, \Theta) \in \hat \Lambda$ we write $J = J(\Theta) = \sqrt{2 (E_0 - \Theta)}$. Then 
\[
\tau (\theta, \Theta) = \frac{2 \, \pi}{J} = \frac{2 \, \pi}{\sqrt{2 (E_0 - \Theta)}}. 
\]
Since $\dot \theta = 1$ and $\dot \Theta = 0$ it follows that $f$ indeed has the form \eqref{eq_innermap}. Moreover
\[
\frac{\partial \bar \theta}{\partial \Theta} = \frac{2 \, \pi}{(2 (E_0 - \Theta))^{\frac{3}{2}}} \neq 0
\]
which implies the required twist condition. 
\end{proof}

Now, there is a foliation of the cylinder $\hat{\Lambda}$ by leaves
\[
\L \left( \Theta^* \right) = \left\{ (\theta, \Theta) \in \hat \Lambda : \Theta = \Theta^* \right\}
\]
for $\Theta^* \leq \Theta_0$, each of which is left invariant by the map $f$ as a consequence of Lemma \ref{lemma_innermap}.

Due to the contents of Section \ref{eq_scatteringmapcomp}, there is a scattering map $\hat S : \hat \Lambda \to \hat \Lambda'$ where $\hat \Lambda'$ is a locally invariant normally hyperbolic cylinder containing $\hat \Lambda$; moreover since $\left. \dot \Theta \right|_{\Lambda^*} = 0$ we have $\hat{S}(\theta, \Theta) = ( \bar \theta, \bar \Theta)$ where
\begin{equation}\label{eq_specificthetabar}
\bar \Theta = \Theta + \e \, J \, \left( \sin \left( \theta + \frac{a_- }{J} \right) - R_{\rho}^0 (0,J,\theta,\Theta) \right) + O \left( \e^2 J \right)
\end{equation}
and
\begin{equation}\label{eq_specificthetabarderivative}
\frac{\partial \bar \Theta}{\partial \theta} = - \e \, J \, \left( \cos \left( \theta + \frac{a_-}{J} \right) + R^1_{\rho} \left( 0, J , \theta, \Theta \right) \right) + O \left( \e^2 J \right)
\end{equation}
with $J=\sqrt{2 (E_0 - \Theta)}$, where we have set $\f = 0$ in \eqref{eq_thetascattering} and \eqref{eq_thetascatteringderivative}, and used \eqref{eq_chiiscos}. By Lemma \ref{lemma_scattering} we can choose $\rho > 0$ sufficiently small so that $\left| R_{\rho}^j (0,J,\theta,\Theta) \right| < \frac{1}{2 \sqrt{2}}$ for $j=0,1$ and for all $J \geq J_0$, $\theta \in T$, and $\Theta \leq \Theta_0$. Choose $\theta^*_{\pm} \in \T$ such that $\theta^*_{\pm} + \frac{a_-}{J} = \pm \frac{\pi}{4} \mod 2 \pi$. For any $\Theta^* \leq \Theta_0$, write $\left(\bar \theta^*_{\pm}, \bar \Theta^*_{\pm} \right)=\hat{S}(\theta^*_{\pm}, \Theta^*)$; by \eqref{eq_specificthetabar} and \eqref{eq_specificthetabarderivative} we have 
\begin{equation}\label{eq_transitionchainjumps}
\bar \Theta^*_+ - \Theta^* \geq \e \, J \, \frac{1}{2 \sqrt{2}} > 0, \quad \bar \Theta^*_- - \Theta^* \leq - \e \, J \, \frac{1}{2 \sqrt{2}} < 0, \quad \left| \frac{ \partial \bar \Theta^*_{\pm}}{\partial \theta} \right| \geq \e \, J \, \frac{1}{2 \sqrt{2}} > 0. 
\end{equation}
Equation \eqref{eq_transitionchainjumps} allows us to choose $\{ \Theta_n \}_{n \in \mathbb{N}}$ where either $\Theta_{n+1} - \Theta_n \geq \e \, J \, \frac{1}{2 \sqrt{2}}$ or $\Theta_{n+1} - \Theta_n \leq - \e \, J \, \frac{1}{2 \sqrt{2}}$, such that the leaves $\L \left( \Theta_n \right)$ and $\L \left( \Theta_{n+1} \right)$ are connected by the scattering map $\hat{S}$, in the sense that there is $z_n \in \L \left( \Theta_n \right)$ such that $\hat{S} ( z_n) \in \L \left( \Theta_{n+1} \right)$; moreover, $\hat S$ maps $\L \left( \Theta_n \right)$ transversely across $\L \left( \Theta_{n+1} \right)$ in the sense of the following lemma. 
\begin{lemma}\label{lemma_transversalityoffoliations}
Let $z_n = \left( \theta^*_{\alpha_n}, \Theta_n \right)$ where $\alpha_n \in \{ +,- \}$ for each $n$, and $\Theta_n$ is as described above, so that $z_n \in \L \left( \Theta_n \right)$ and $\hat S \left( z_n \right) \in \L \left( \Theta_n \right)$. Then we have
\begin{equation}\label{eq_transversalityoffoliations}
T_{\hat S (z_n)} \hat{\Lambda} = T_{\hat S (z_n)} \hat S \left( \L \left( \Theta_n \right) \right) + T_{\hat S (z_n)} \L \left( \Theta_{n+1} \right)
\end{equation}
as a result of the third inequality of \eqref{eq_transitionchainjumps}. 
\end{lemma}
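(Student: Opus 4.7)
The plan is to exploit the fact that everything in sight is low-dimensional: the cylinder $\hat\Lambda$ is two-dimensional with global coordinates $(\theta,\Theta)$, and each leaf $\L(\Theta^*)=\{\Theta=\Theta^*\}$ is a one-dimensional circle. So \eqref{eq_transversalityoffoliations} is really the statement that two one-dimensional subspaces of the two-dimensional tangent space $T_{\hat S(z_n)}\hat\Lambda$ are distinct, and this will follow at once from a dimension count once we exhibit a vector in $T_{\hat S(z_n)}\hat S(\L(\Theta_n))$ that is not tangent to $\L(\Theta_{n+1})$.

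First I would record the easy identifications: $T_{\hat S(z_n)}\L(\Theta_{n+1})=\mathrm{span}(\partial_\theta)$, since the leaves are the level sets $\Theta=\mathrm{const}$. On the other side, writing $\hat S(\theta,\Theta)=(\bar\theta,\bar\Theta)$, the tangent space $T_{\hat S(z_n)}\hat S(\L(\Theta_n))$ is spanned by the push-forward of $\partial_\theta$ under $\hat S$, namely by
\[
d\hat S(z_n)\cdot\partial_\theta \;=\; \frac{\partial\bar\theta}{\partial\theta}(z_n)\,\partial_\theta \;+\; \frac{\partial\bar\Theta}{\partial\theta}(z_n)\,\partial_\Theta.
\]
For the two one-dimensional subspaces to span $T_{\hat S(z_n)}\hat\Lambda$, it is enough that this vector have a nonzero $\partial_\Theta$-component, i.e.\ that $\frac{\partial\bar\Theta}{\partial\theta}(z_n)\neq 0$.

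At this point the work is done by the input: the point $z_n=(\theta^*_{\alpha_n},\Theta_n)$ is chosen precisely so that $\theta^*_{\alpha_n}+a_-/J=\pm\pi/4\pmod{2\pi}$, where the cosine in \eqref{eq_specificthetabarderivative} attains magnitude $1/\sqrt2$. Combined with the bound $|R^1_\rho|<1/(2\sqrt 2)$ (guaranteed by the choice of $\rho$ before \eqref{eq_transitionchainjumps}), this gives the third inequality of \eqref{eq_transitionchainjumps}, i.e.\ $|\partial_\theta \bar\Theta(z_n)|\geq \e J/(2\sqrt 2)>0$. Therefore $d\hat S(z_n)\cdot\partial_\theta$ is not a multiple of $\partial_\theta$, so the two one-dimensional subspaces on the right of \eqref{eq_transversalityoffoliations} are distinct and, being contained in the two-dimensional space $T_{\hat S(z_n)}\hat\Lambda$, must sum to it.

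I do not anticipate a real obstacle here: the lemma is essentially a reading of the third inequality in \eqref{eq_transitionchainjumps} through the lens of linear algebra. The only thing to be mildly careful about is that the partial derivative in \eqref{eq_specificthetabarderivative} is evaluated at the preimage $z_n$ (not at $\hat S(z_n)$), which matches the way $d\hat S$ pushes forward tangent vectors; this is exactly what is needed for transversality of the image curve with the target leaf.
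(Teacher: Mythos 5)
Your proposal is correct and follows essentially the same route as the paper: both arguments reduce to the single fact that $\frac{\partial \bar\Theta}{\partial\theta}(z_n)\neq 0$, guaranteed by the third inequality of \eqref{eq_transitionchainjumps}. The paper writes out the $2\times 2$ matrix $D\hat S$ and solves the linear system $(Q,P)=D\hat S\,(Q_0,0)+(Q_1,0)$ explicitly, whereas you argue by a dimension count that two distinct lines in the two-dimensional tangent space must span it; this is a minor presentational difference, and your version has the small advantage of not needing the precise form of the first row of $D\hat S$, only the nonvanishing of the $(2,1)$ entry.
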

\begin{proof}
Elements of the tangent space $T_{\hat S (z_n)} \hat{\Lambda} $ are of the form $(Q,P)$ where $Q$ is the coordinate pointing in the positive $\theta$ direction and $P$ is the coordinate pointing in the positive $\Theta$ direction. Moreover, elements of the tangent spaces $T_{z} \L \left( \Theta \right)$ are of the form $(Q,0)$ due to the topology of the leaves $\L \left( \Theta \right)$. Fix any $(Q,P) \in T_{\hat S (z_n)} \hat{\Lambda} \simeq \R^2$. We show that there is $(Q_0,0) \in T_{z_n} \L \left( \Theta_n \right)$ and $(Q_1,0) \in T_{\hat S \left( z_n \right)} \L \left( \Theta_{n+1} \right)$ such that $(Q,P) = D_{z_n} \hat S \, (Q_0,0) + (Q_1,0)$ which proves \eqref{eq_transversalityoffoliations}. Due to \eqref{eq_unperturbedscattering} and the fact that the Hamiltonian of the perturbation is independent of $\Theta$, the derivative $D \hat S$ is given by
\[
D \hat S = \left(
\begin{matrix}
1 & 0 \\
\frac{\partial \bar \Theta}{\partial \theta} & \frac{\partial \bar \Theta}{\partial \Theta}
\end{matrix}
\right). 
\]
Therefore we require
\[
\left( 
\begin{matrix}
Q \\
P
\end{matrix}
\right) = \left(
\begin{matrix}
1 & 0 \\
\frac{\partial \bar \Theta}{\partial \theta} & \frac{\partial \bar \Theta}{\partial \Theta}
\end{matrix}
\right) \left( 
\begin{matrix}
Q_0 \\
0
\end{matrix}
\right) + \left( 
\begin{matrix}
Q_1 \\
0
\end{matrix}
\right) = \left(
\begin{matrix}
Q_0 + Q_1 \\
\frac{\partial \bar \Theta}{\partial \theta} \, Q_0
\end{matrix}
\right). 
\]
Due to the third inequality of \eqref{eq_transitionchainjumps}, the derivative $\frac{\partial \bar \Theta}{\partial \theta}$ is bounded away from 0, and so we may choose $Q_0 = \left( \frac{\partial \bar \Theta}{\partial \theta} \right)^{-1} P$ and $Q_1 = Q - Q_0$ to complete the proof of the lemma. 
\end{proof}

Due to Lemmas \ref{lemma_innermap} and \ref{lemma_transversalityoffoliations}, the assumptions of the main theorem of \cite{gidea2006topological} are satisfied. It follows that\footnote{The statement of the main theorem in \cite{gidea2006topological} mentions only finite pseudo-orbits; however the theorem applies equally to infinite pseudo-orbits. Indeed, this has already been pointed out in \cite{clarke2022topological}; see also the discussion in \cite{clarke2022topological} regarding time estimates of the type \eqref{eq_timeestimates} when the system's parameters are nonuniform.}, for any $\eta > 0$, there are $\{ \hat z_n \}_{n \in \mathbb{N}} \subset \{ \f = 0 \} \cap \{ H^*_{\e} = E_0 \}$ and $\{ m_n \}_{n \in \mathbb{N}} \subset \mathbb{N}$ such that 
\[
\hat z_{n+1} = F^{m_n} (z_n), \quad d \left( \hat z_n, \L \left( \Theta_n \right) \right) < \eta. 
\]
Moreover the time for the flow to move a distance of order 1 in the $\Theta$ direction is of order
\begin{equation}\label{eq_timeestimates}
\min \left\{ \e^3 J^{-1}, \e \, J^{-4} \right\}. 
\end{equation}
This implies that when we are low on the cylinder (i.e. smaller values of $J = \sqrt{2 \left(E_0 - \Theta \right)}$) the time to move a distance of order 1 in the $\Theta$ direction is of order $\e^3 J^{-3}$, whereas when we are higher on the cylinder (i.e. larger values of $J$) the time to move a distance of order 1 in the $\Theta$ direction is of order $\e \, J^{-6}$. Equation \eqref{eq_timeestimates} follows from the time estimates given in \cite{clarke2022topological}; indeed, the order of the splitting of separatrices in the $\Theta$ direction is $\e$, the twist condition is of order $J^{-3}$ by Lemma \ref{lemma_innermap}, the jumps in the scattering map in the $\Theta$ direction are of order $\e \, J$ by Lemma \ref{lemma_scattering}, and the return time to the section $\{ \f = 0 \} \cap \{ H^*_{\e} = E_0 \}$ is of order $J^{-1}$. This completes the proof of Theorem \ref{theorem_main2}.

\appendix

\section{The Scattering Map of a Normally Hyperbolic Invariant Manifold}

In this section we denote by $M$ a $C^r$ smooth manifold, and by $\phi^t : M \to M$ a smooth complete flow with $\left. \frac{d}{dt} \right|_{t=0} \phi^t = X$ where $X \in C^r (M, TM)$. Let $\Lambda \subset M$ be a compact $\phi^t$-invariant submanifold, possibly with boundary. By $\phi^t$-invariant we mean that $X$ is tangent to $\Lambda$, but that orbits can escape through the boundary (a concept sometimes referred to as \emph{local} invariance). 

\begin{definition}
We call $\Lambda$ a \emph{normally hyperbolic invariant manifold} for $\phi^t$ if there is $0 < \lambda < \mu^{-1}$, a positive constant $C$ and an invariant splitting of the tangent bundle
\begin{equation}
T_{\Lambda} M = T \Lambda \oplus E^s \oplus E^u
\end{equation}
such that:
\begin{equation} \label{eq_normalhyperbolicity}
\def\arraystretch{1.5}
\begin{array}{c}
\| D \phi^t |_{E^s} \| \leq C \lambda^t \mbox{ for all } t \geq 0, \\
\| D \phi^t |_{E^{u}} \| \leq C \lambda^{-t} \mbox{ for all } t \leq 0, \\
\| D \phi^t |_{T \Lambda} \| \leq C \mu^{| t |} \mbox{ for all } t \in \mathbb{R}.
\end{array}
\end{equation}
Moreover, $\Lambda$ is called an $r$-\emph{normally hyperbolic invariant manifold} if it is $C^r$ smooth, and
\begin{equation}\label{eq_largespectralgap}
0 < \lambda < \mu^{-r} < 1
\end{equation}
 for $r \geq 1$. This is called a \emph{large spectral gap} condition.
\end{definition}

This definition guarantees the existence of stable and unstable invariant manifolds $W^{s,u} (\Lambda)\subset M$ defined as follows. The local stable manifold $W^{s}_{\mathrm{loc}}(\Lambda)$ is the set of points in a small neighbourhood of $\Lambda$ whose forward orbits never leave the neighbourhood, and tend exponentially to $\Lambda$. The local unstable manifold $W^{u}_{\mathrm{loc}}(\Lambda)$ is the set of points in the neighbourhood whose backward orbtis stay in the neighbourhood and tend exponentially to $\Lambda$.  We then define
\begin{equation}
W^s(\Lambda) = \bigcup_{t \geq 0}^{\infty} \phi^{-t} \left( W^{s}_{\mathrm{loc}}(\Lambda) \right), \quad W^u(\Lambda) = \bigcup_{t \geq 0}^{\infty} \phi^{t} \left( W^{u}_{\mathrm{loc}}(\Lambda) \right).
\end{equation}
On the stable and unstable manifolds we have the strong stable and strong unstable foliations, the leaves of which we denote by $W^{s,u}(x)$ for $x \in \Lambda$. For each $x \in \Lambda$, the leaf $W^s(x)$ of the strong stable foliation is tangent at $x$ to $E^s_x$, and the leaf $W^u(x)$ of the strong unstable foliation is tangent at $x$ to $E^u_x$. Moreover the foliations are invariant in the sense that $\phi^t \left( W^s (x) \right) = W^s \left( \phi^t (x) \right)$ and $\phi^t \left( W^u (x) \right) = W^u \left( \phi^t (x) \right)$ for each $x \in \Lambda$ and $t \in \mathbb{R}$. We thus define the \emph{holonomy maps} $\pi^{s,u} : W^{s,u} (\Lambda) \to \Lambda$ to be projections along leaves of the strong stable and strong unstable foliations. That is to say, if $x \in W^s (\Lambda)$ then there is a unique $x_+ \in \Lambda$ such that $x \in W^s(x_+)$, and so $\pi^s(x)=x_+$. Similarly, if $x \in W^u (\Lambda)$ then there is a unique $x_- \in \Lambda$ such that $x \in W^u(x_-)$, in which case $\pi^u(x)=x_-$.

Now, suppose that $x \in \left(W^s(\Lambda) \pitchfork W^u (\Lambda)\right) \setminus \Lambda$ is a transverse homoclinic point such that $x \in W^s(x_+) \cap W^u(x_-)$. We say that the homoclinic intersection at $x$ is \emph{strongly transverse} if
\begin{equation}
\begin{split} \label{eq_strongtransversality}
T_x W^s (x_+) \oplus T_x \left( W^s(\Lambda) \cap W^u(\Lambda) \right) = T_x W^s (\Lambda), \\
T_x W^u (x_-) \oplus T_x \left( W^s(\Lambda) \cap W^u(\Lambda) \right) = T_x W^u (\Lambda).
\end{split}
\end{equation}
In this case we can take a sufficiently small neighbourhood $\Gamma$ of $x$ in $W^s(\Lambda) \cap W^u(\Lambda)$ so that \eqref{eq_strongtransversality} holds at each point of $\Gamma$, and the restrictions to $\Gamma$ of the holonomy maps are bijections onto their images. We call $\Gamma$ a \emph{homoclinic channel} . We can then define the scattering map as follows \cite{delshams2008geometric}.

\begin{definition}
Let $y_-\in \pi^u \left( \Gamma \right)$, let $y = \left(\left. \pi^u \right|_{\Gamma} \right)^{-1} (y_-)$, and let $y_+ = \pi^s(y)$. The \emph{scattering map} $S : \pi^u (\Gamma) \to \pi^s (\Gamma)$ is defined by
\begin{equation}
S = \pi^s \circ \left( \pi^u \right)^{-1} : y_- \longmapsto y_+.
\end{equation}
\end{definition}

Suppose now that the smoothness $r$ of $M$ and $X$ is at least $2$, suppose the normally hyperbolic invariant manifold $\Lambda$ is a $C^r$ submanifold of $M$, and suppose the large spectral gap condition \eqref{eq_largespectralgap} holds. This implies $C^{r-1}$ regularity of the strong stable and strong unstable foliations \cite{hirsch1970invariant}, which in turn implies that the scattering map $S$ is $C^{r-1}$ \cite{delshams2008geometric}.

\bibliographystyle{abbrv}
\bibliography{gf_fa_refs} 

\end{document}